\newtheorem{theorem}{Theorem}
\newtheorem{definition}{Definition}
\newtheorem{lemma}{Lemma}
\newtheorem*{remark}{Remark}
\newtheorem{corollary}{Corollary}
\newtheorem{assumption}{Assumption}
\DeclareMathOperator*{\argmin}{arg\,min}
\definecolor{mygreen}{RGB}{153,255,153}
\definecolor{myorange}{RGB}{255,178,102}
\definecolor{myred}{RGB}{255,153,153}
\definecolor{myblue}{RGB}{153,204,255}
\title{Non-Monotone Variational Inequalities}
\author{
    Sina Arefizadeh$^*$\\
    Dept. of Electrical and Computer Engineering\\
    Arizona State University\\
    Tempe, Arizona, USA\\
    \texttt{sarefiza@asu.edu}
\And
    Angelia~Nedi\'c\\
    Dept. of Electrical and Computer Engineering\\
    Arizona State University\\
    Tempe, Arizona, USA\\
    \texttt{Angelia.Nedich@asu.edu}
}
\theoremstyle{definition}
\begin{document}
\thanks{Corresponding author}
\thanks{ This work has been supported by the NSF award CCF 2106336.}
\maketitle
\begin{abstract}
In this paper, we focus on deriving some sufficient conditions for the existence of solutions to non-monotone Variational Inequalities (VIs) based on inverse mapping theory. We have obtained several widely applicable sufficient conditions for this problem and have introduced a sufficient condition for the existence of a Minty solution. We have shown that the extra-gradient method converges to a solution of VI in the presence of a Minty solution. Additionally, we have shown that, under some extra assumption, the algorithm is efficient and approaches a particular type of Minty solution. Interpreting these results in an equivalent game theory problem, weak coupling conditions will be obtained, stating that if the players' cost functions are sufficiently weakly coupled, the game has a pure quasi-Nash equilibrium. Moreover, under the additional assumption of the existence of Minty solutions, a pure Nash equilibrium exists for the corresponding game. 
\end{abstract}

\section{Introduction}
The study of Variational Inequalities (VIs) has recently become a topic of a wide interest. In particular, many solution concepts in game theory are tightly connected with solution concepts related to variational inequalities. The use of variational inequalities in game theory, particularly in equilibrium problems and their reformulations in the variational inequality framework, has been widely discussed in \cite{C6}. The role of Minty variational inequalities in studying evolutionary games' stable state in nonlinear population games is investigated in \cite{C5}. Moreover, there is a line of research, such as \cite{C7} and references therein, focusing on stochastic variational inequalities to model the equilibrium of stochastic games under uncertainty. Recently in \cite{C8}, the problem of guaranteeing the existence of Nash equilibrium in a class of network games is studied under sufficient conditions in the form of monotonicity of the game mapping.

The majority of research papers are devoted to designing algorithms to find a solution to monotone variational inequalities. Among these studies, one can refer to the first-order projection methods, which were first developed by studies on the gradient method by Sibony \cite{C9}, the proximal method by Martinent \cite{C10}, the extra gradient method by Korpelevich \cite{C4} and, later on, by studies such as modified forward-backward \cite{C11}, mirror-prox \cite{C12}, dual exploration \cite{C13}, hybrid proximal extra gradient \cite{C14} methods etc.

More recently, studies have also been on developing higher-order projection methods and determining their global convergence iteration complexities, such as in \cite{C15}, \cite{C16}, and \cite{C17}. For non-monotone VIs, there are studies focusing on deriving algorithms converging to a solution of the VI based on the existence of a Minty solution. The work in \cite{C2} and the references therein are examples of such studies. The study of sufficient conditions for the existence of a solution to a non-monotone VI itself is an interesting question that can shed some light on the problem of existence of a quasi Nash equilibrium in game theory. In the Generalized Nash Equilibrium Problem (GNEP), where the main goal is to guarantee the existence of a pure Nash equilibrium for a given game, there are three main assumptions that can be relaxed and become a direction for further studies. 

The literature considers the following directions for potential relaxations:
\begin{itemize}
\item [a)] Relaxation of the continuity of cost functions.
\item [b)] Relaxation of the compactness of action spaces.
\item [c)] Relaxation of the quasi-convexity of cost functions.
\end{itemize}
These three directions are not typically studied intensively all at once. However, there are extensive studies dedicated to each one individually. Among the studies on item a) in the list as mentioned above, we can refer to \cite{morgan2004existence}. This study does not address items b) and c). For item b), which can be alternatively studied through VIs by relaxing compactness of the game mapping domain, a relevant study is \cite{C19}. In this case for game mapping $F$ extensive literature examines the conditions under which a solution for the corresponding variational inequality (VI) exists. Additionally, some papers discuss potential relaxations of the action space's compactness, concluding with conditions on the compactness of lower-level sets of the Nikaido-Isoda function of the game \cite{C18}, \cite{C19}, \cite{C20}. 
Additionally, the quasi-convexity assumption, item c), is considered an artificial addition extraneous to the fundamental nature of the model, making its relaxation a reasonable consideration. Due to the complexity of such relaxation, the literature on this topic is limited. Among the few studies available, \cite{C18} is noteworthy; however, it does not address other types of relaxations.

Therefore, the main goals and contributions of this current study are threefold. Firstly, to study both items b) and c) of the above list simultaneously and to focus on the research question concerning the conditions under which a solution for a non-monotone VI in the presence of a nonempty closed convex but potentially non compact domain exists. Secondly, to guarantee the existence of a stronger version, a Minty solution for such a problem. Finally, to obtain a solution for non-monotone VIs numerically, which we accomplished by considering the extra-gradient method.

The organization of the rest of the paper is as follows: Section \ref{Sec-Notions & Terminology} is devoted to notions and terminology. 
Section \ref{Sec-Main Results} is dedicated to the main results of the paper regarding non-monotone variational inequalities and how they can be interpreted in game theory. 
Section \ref{Sec-Algorithm analysis} considers Korpelevich method under a set of assumptions introduced in the preceding sections. Finally, Section \ref{Sec-conclusion} concludes the paper with a summary of contributions.

\section{Notions and Terminology} \label{Sec-Notions & Terminology}
In this section, we provide some definitions and terminologies about the variational inequalities. 

\begin{definition}
[Solution of VI \cite{facchinei2003finite}] Given a set $K\subset\mathbb{R}^m$ and a mapping $F:K\to\mathbb{R}^m$, the variational inequality problem, denoted by
VI$(K,F)$, consists of determining
   a point $x^*\in K$ such that 
   $$\langle F(x^*),x-x^*\rangle\geq 0\qquad\hbox{for all $x\in K$}.$$ 
   \end{definition}
A point $x^*$ satisfying the preceding inequality is referred to as a (strong) solution to the variational inequality problem VI$(K,F)$. The set of all such solutions is denoted by SOL$(K,F)$.

Another concept of a solution to a VI($K,F)$ exists, known as weak or Minty solution, defined as follows. 
   \begin{definition}[Minty Solution \cite{crespi2005existence}]
   Given a VI$(K,F)$, a point $x^*\in K$  such that 
   $$\langle F(x),x-x^*\rangle\geq 0\qquad\hbox{for all $x\in K$},$$ is a Minty solution to VI$(K,F)$. 
\end{definition}
The set of all Minty solutions to VI$(K,F)$ is denoted by MSOL$(K,F)$.

A well-studied class of variational inequalities is monotone VIs. The following definition introduces it formally.  
\begin{definition}[Strongly Monotone and Monotone mappings \cite{C2}]
   Given a set $K\subseteq\mathbb{R}^m$ and a mapping $F:K\to\mathbb{R}^m$, the mapping $F$ is strongly monotone if for some $\mu>0$ and for all $x,y\in K$, \[\langle F(x)-F(y),x-y\rangle\geq \mu\|x-y\|^2.\]
    The mapping $F$ is monotone if the preceding relation holds with $\mu=0$.
\end{definition}

The following lemma gives a relation between these two solution concepts.
\begin{lemma}[Lemma 2.2 - Minty's Lemma \cite{C2}]\label{Lem-Minty Lemma}
    Let $K \subseteq \mathbb{R}^m$ be a non-empty closed convex set
    and let $F:K \to\mathbb{R}^m$ be a mapping. The following statements hold:
    \begin{itemize}
        \item[(a)]
        If $F$ is continuous, then
    every Minty solution to VI$(K,F)$ is also a solution to VI$(K,F)$,  i.e., 
    \[{\rm MSOL}(K,F)\subseteq {\rm SOL}(K,F).\]
    \item[(b)] If $F$ is monotone, then every solution to VI$(K,F)$ is also a Minty solution to VI$(K,F)$, i.e.,
    \[{\rm SOL}(K,F)\subseteq {\rm MSOL}(K,F).\]
    \end{itemize}
\end{lemma}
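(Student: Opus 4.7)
The plan is to handle the two parts separately, since they rely on quite different arguments: part (b) is essentially a one-line consequence of monotonicity plus the VI definition, whereas part (a) requires the convex combination / continuity trick that is standard for Minty-type results.

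For part (b), I would start from the assumption that $x^*\in {\rm SOL}(K,F)$, so $\langle F(x^*),x-x^*\rangle\geq 0$ for every $x\in K$. Monotonicity of $F$ gives $\langle F(x)-F(x^*),x-x^*\rangle\geq 0$ for every $x\in K$. Adding these two inequalities yields $\langle F(x),x-x^*\rangle\geq 0$ for every $x\in K$, which is exactly the Minty condition. So (b) is immediate and there is no real obstacle there.

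Part (a) is the substantive direction. Assume $x^*\in {\rm MSOL}(K,F)$ and fix an arbitrary $y\in K$. The idea is to test the Minty condition along the segment from $x^*$ to $y$. For $t\in(0,1]$, define $x_t=(1-t)x^*+ty$; convexity of $K$ ensures $x_t\in K$. Plugging $x_t$ into the Minty inequality and using $x_t-x^*=t(y-x^*)$ with $t>0$ gives $\langle F(x_t),y-x^*\rangle\geq 0$. Letting $t\to 0^+$ and using the continuity of $F$ (so that $F(x_t)\to F(x^*)$), I pass to the limit in the inner product to conclude $\langle F(x^*),y-x^*\rangle\geq 0$. Since $y\in K$ was arbitrary, $x^*\in {\rm SOL}(K,F)$.

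The main technical point to be careful with is the limit step in part (a): the continuity of $F$ on $K$ is what legitimizes the passage $\langle F(x_t),y-x^*\rangle\to\langle F(x^*),y-x^*\rangle$, and convexity of $K$ is what keeps the test points $x_t$ in the feasible set so that the Minty inequality applies at them. Apart from these two ingredients, the argument is purely algebraic. I do not anticipate any serious obstacle, although one should state explicitly that $y\in K$ is arbitrary before concluding that $x^*$ solves the (strong) VI.
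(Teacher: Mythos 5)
Your proposal is correct and follows essentially the same route as the paper's own proof: part (b) by adding the monotonicity inequality to the VI inequality at $x^*$, and part (a) by testing the Minty inequality at the convex combinations $x_t=(1-t)x^*+ty$, cancelling the factor $t>0$, and passing to the limit $t\to 0^+$ via continuity of $F$. No substantive differences.
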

\begin{proof}  
(a) Let us consider an $x^*\in {\rm MSOL}(K,F)$. Hence, \begin{equation}\label{eq-minty lemma 1}
\langle F(x),x-x^*\rangle\geq 0\qquad\hbox{for all $x\in K$}.
   \end{equation}
For any arbitrary $x\in K$, consider the point $v=x^*+t(x-x^*)$ where $t\in (0,1]$. Note that $v\in K$ since $x,x^*\in K$ and $K$ is convex. Thus, by using $v\in K$,
from~\eqref{eq-minty lemma 1} we have
\begin{equation*}
    \langle F(x^*+t(x-x^*)),t(x-x^*)\rangle\geq 0\quad\hbox{for all $x\in K,t\in (0,1]$}.
    \end{equation*}
 Since $t>0$, it follows that
\begin{equation*}
    \langle F(x^*+t(x-x^*)),x-x^*\rangle\geq 0\qquad\hbox{for all $x\in K,t\in (0,1]$}.
    \end{equation*}
Letting $t$ go to zero and using the continuity of $F$ we have
Hence, $x^*\in {\rm SOL}(K,F)$, implying that
\[{\rm MSOL}(K,F)\subseteq {\rm SOL}(K,F).\]
(b) Let $x^*$ be a solution to VI$(K,F)$.
By the monotonicity of $F$, we have
\begin{equation}\label{eq-minty lemma 5}
\langle F(x)-F(x^*),x-x^*\rangle\geq 0.
\end{equation}
Since $x^*\in {\rm SOL}(K,F)$ it follows that $\langle F(x^*),x-x^*\rangle\geq 0$. Therefore, from~\eqref{eq-minty lemma 5} it follows that $\langle F(x),x-x^*\rangle\geq 0$, thus implying that $x^*\in {\rm MSOL}(K,F)$. 
\end{proof}

Combining parts (a) and (b) of Lemma~\ref{Lem-Minty Lemma}, we see that 
for a continuous and monotone mapping $F$
we have
\[{\rm MSOL}(K,F)= {\rm SOL}(K,F).\] 
This result has been shown in Lemma~1.5 of \cite{KSbook}, where both monotonicity and continuity of the mapping $F$ are assumed. Our Lemma~\ref{Lem-Minty Lemma} considers these properties separately to gain a deeper insight into the role of these properties in the relations among the solutions and Minty solutions of a VI$(K,F)$.

\section{Main Results} \label{Sec-Main Results}
In this section, we provide some sufficient conditions for the existence of solutions to a VI problem.
In the sequel, we will use the Inverse mapping Theorem, provided below.
We let $\nabla F(\cdot)$ denote the Jacobian of a mapping $F(\cdot)$. We denote the determinant of the Jacobian $\nabla F(x)$ by $|\nabla F(x)|$. 

\begin{theorem}[Inverse Mapping Theorem \cite{lebl2018introduction}]\label{Thm-Inverse function}
    Given a vector $\textbf{a}$, let $F(\cdot)$ be a continuously differentiable mapping on some open set containing the vector $\textbf{a}$. Suppose that $|\nabla F(\textbf{a})|\neq 0$. Then, there is an open set $V$ containing the vector $\textbf{a}$ and an open set $W$ containing the vector $F(\textbf{a})$ such that $F:V\rightarrow W$ has a continuous inverse mapping $F^{-1}:W\rightarrow V$, which is continuously differentiable for all $y\in W$.  
\end{theorem}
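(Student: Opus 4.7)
The plan is to prove the Inverse Mapping Theorem via a contraction-mapping argument, which is the standard route. The idea is to reformulate the equation $F(x)=y$ as a fixed-point problem for a map that is contractive near $\textbf{a}$, then invoke the Banach Fixed Point Theorem to produce a unique preimage for every $y$ near $F(\textbf{a})$.

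First I would reduce to a normalized setting. By translating the domain and range and composing with the linear map $\nabla F(\textbf{a})^{-1}$, I may assume without loss of generality that $\textbf{a}=0$, $F(0)=0$, and $\nabla F(0)=I$; the hypothesis $|\nabla F(\textbf{a})|\neq 0$ guarantees that $\nabla F(\textbf{a})^{-1}$ exists, so this reduction is legitimate and does not affect continuous differentiability. Next I introduce the auxiliary mapping $g(x)=x-F(x)$, which satisfies $\nabla g(0)=0$. Since $\nabla F$ is continuous, I can choose $r>0$ so that $\|\nabla g(x)\|\leq \tfrac{1}{2}$ on the closed ball $\overline{B(0,r)}$, and then the Mean Value Inequality yields $\|g(x)-g(y)\|\leq \tfrac{1}{2}\|x-y\|$ on that ball, i.e.\ $g$ is a $\tfrac{1}{2}$-contraction there.

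Now, for each fixed $y$ with $\|y\|\leq r/2$, define $\varphi_y(x)=y+g(x)$. A quick estimate shows $\varphi_y$ sends $\overline{B(0,r)}$ into itself and remains a $\tfrac{1}{2}$-contraction, so Banach's Fixed Point Theorem provides a unique $x\in \overline{B(0,r)}$ with $\varphi_y(x)=x$, equivalently $F(x)=y$. Letting $W=B(0,r/2)$ and $V=F^{-1}(W)\cap B(0,r)$, this construction defines an inverse $F^{-1}:W\to V$. Continuity of $F^{-1}$ follows directly from the contraction estimate: two fixed points corresponding to $y_1,y_2$ satisfy $\|F^{-1}(y_1)-F^{-1}(y_2)\|\leq 2\|y_1-y_2\|$. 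One must also verify that $V$ is open, which uses continuity of $F$ together with openness of $W$.

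The step I expect to be the main obstacle is establishing that $F^{-1}$ is continuously differentiable on $W$. The derivative should be $\nabla F^{-1}(y)=\bigl(\nabla F(F^{-1}(y))\bigr)^{-1}$, so I would first shrink $V$ (and correspondingly $W$) if necessary to ensure $|\nabla F(x)|\neq 0$ on all of $V$, using continuity of the determinant. Then I would verify the differentiability claim by writing, for $y,y+k\in W$ with $x=F^{-1}(y)$ and $x+h=F^{-1}(y+k)$, the increment $k=F(x+h)-F(x)=\nabla F(x)h+o(\|h\|)$, solving to get $h=\nabla F(x)^{-1}k+o(\|k\|)$ (using the Lipschitz bound on $F^{-1}$ to trade $o(\|h\|)$ for $o(\|k\|)$), and concluding that $\nabla F^{-1}(y)=\nabla F(x)^{-1}$. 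Continuity of $y\mapsto \nabla F^{-1}(y)$ then follows from continuity of $F^{-1}$, of $\nabla F$, and of matrix inversion on the set of invertible matrices. This last bookkeeping, together with keeping the neighborhoods consistent throughout the shrinking arguments, is the delicate part of the proof.
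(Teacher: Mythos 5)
The paper does not prove this statement at all: it is quoted verbatim as a known classical result from the cited reference \cite{lebl2018introduction} and used as a black box in Theorem~\ref{Thm-main theorem} and Theorem~\ref{Thm-main theorem-prime}. Your proposal is the standard contraction-mapping proof of the Inverse Function Theorem (normalize so $\nabla F(\textbf{a})=I$, show $g(x)=x-F(x)$ is a $\tfrac{1}{2}$-contraction near $\textbf{a}$, apply the Banach Fixed Point Theorem to $\varphi_y(x)=y+g(x)$, deduce the Lipschitz bound $\|F^{-1}(y_1)-F^{-1}(y_2)\|\le 2\|y_1-y_2\|$, and then verify $\nabla F^{-1}(y)=\bigl(\nabla F(F^{-1}(y))\bigr)^{-1}$), which is exactly the argument in the cited textbook; the outline is correct and complete, including the two points that are easy to overlook (shrinking $V$ so that $|\nabla F|\neq 0$ throughout, and using the Lipschitz bound on $F^{-1}$ to convert $o(\|h\|)$ into $o(\|k\|)$).
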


Defining $F(\mathbb{R}^{m}):=\{F(x)|x\in \mathbb{R}^m\}$, in the rest of the paper, we draw attention to the class of mappings $F$ with closed $F(\mathbb{R}^{m})$ 
 and the class of continuous mappings and study them. One example of mappings with  $F(\mathbb{R}^{m})$ being a closed set is the class of closed mappings as they are defined in the sequel.
\begin{definition}[Closed Mapping\cite{C1}]\label{Def-Close map}
A mapping $F:\mathbb{R}^m\rightarrow \mathbb{R}^m$ is closed if for every closed set $C\subseteq \mathbb{R}^m$, the image set $F(C)$ is closed, where $F(C)=\{F(x)\mid x\in C\}$.
\end{definition}

In what follows, we use $B_r(x)$ to denote an open ball centered at a point $x\in \mathbb{R}^m$ with a radius $r>0$.
The following result provides sufficient conditions guaranteeing the existence of solution for VI$(\mathbb{R}^m,F)$.
\begin{theorem} \label{Thm-main theorem}
    Let $F:\mathbb{R}^m\to\mathbb{R}^m$ be a continuously differentiable and $F(\mathbb{R}^{m})$ be closed such that 
    $|\nabla{F(x)}|\neq 0$
    for every $x\in \mathbb{R}^m$ where $F(x)\neq 0$. Then, the VI$(\mathbb{R}^m,F)$ has a solution.
\end{theorem}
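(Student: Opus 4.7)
The plan is to reduce the variational inequality on all of $\mathbb{R}^m$ to the equation $F(x^*)=0$, and then use the Inverse Mapping Theorem together with the closedness of $F(\mathbb{R}^m)$ to force $0$ to lie in the range of $F$.

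First, I would observe that because $K=\mathbb{R}^m$, a point $x^*$ satisfies $\langle F(x^*),x-x^*\rangle\ge 0$ for every $x\in\mathbb{R}^m$ if and only if $F(x^*)=0$: choosing $x=x^*-F(x^*)\in\mathbb{R}^m$ yields $-\|F(x^*)\|^2\ge 0$, and conversely $F(x^*)=0$ trivially satisfies the inequality. Thus proving that VI$(\mathbb{R}^m,F)$ admits a solution is the same as proving $0\in F(\mathbb{R}^m)$.

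Next, I would argue by contradiction. Assume $F(x)\neq 0$ for every $x\in\mathbb{R}^m$. The hypothesis then gives $|\nabla F(x)|\neq 0$ at every point of $\mathbb{R}^m$, so Theorem~\ref{Thm-Inverse function} applies at each $a\in\mathbb{R}^m$: there exist open neighborhoods $V\ni a$ and $W\ni F(a)$ with $F(V)=W$. Consequently $F$ is an open mapping on $\mathbb{R}^m$, and in particular $F(\mathbb{R}^m)$ is an open subset of $\mathbb{R}^m$.

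Finally, combine this with the standing assumption that $F(\mathbb{R}^m)$ is closed. Then $F(\mathbb{R}^m)$ is a nonempty subset of $\mathbb{R}^m$ that is simultaneously open and closed; since $\mathbb{R}^m$ is connected, this forces $F(\mathbb{R}^m)=\mathbb{R}^m$, contradicting the assumption $0\notin F(\mathbb{R}^m)$. Hence some $x^*\in\mathbb{R}^m$ satisfies $F(x^*)=0$, which by the first step is a solution of VI$(\mathbb{R}^m,F)$.

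The main conceptual step is the packaging of the pointwise Inverse Mapping Theorem into the global statement that $F$ is an open map; once that is done, the clopen-set-in-a-connected-space argument closes the proof without any further estimates. No delicate continuity or coercivity arguments are needed, and the two hypotheses (nonvanishing Jacobian away from zeros of $F$, and closedness of $F(\mathbb{R}^m)$) are each used exactly once.
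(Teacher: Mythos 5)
Your proof is correct, and it takes a genuinely different route from the paper's. The paper also reduces the problem to showing $0\in F(\mathbb{R}^m)$, but it argues via the quantity $b=\inf_{x\in\mathbb{R}^m}\|F(x)\|$: it takes a minimizing sequence $\{x^k\}$, extracts a convergent subsequence of $\{F(x^k)\}$, uses closedness of $F(\mathbb{R}^m)$ to write the limit as $F(\bar x)$ with $\|F(\bar x)\|=b$, and then, assuming $b>0$, applies the Inverse Mapping Theorem at the single point $\bar x$ to find a preimage $z$ of $(1-\alpha)F(\bar x)$ with $\|F(z)\|<b$, contradicting the definition of the infimum. You instead assume $F$ has no zero, note that the nonvanishing Jacobian then makes $F$ an open map everywhere, and conclude that $F(\mathbb{R}^m)$ is a nonempty clopen subset of the connected space $\mathbb{R}^m$, hence all of $\mathbb{R}^m$ --- contradicting $0\notin F(\mathbb{R}^m)$. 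Both arguments invoke the Inverse Mapping Theorem and the closedness of the range exactly once; yours dispenses with the subsequence extraction and in fact establishes the stronger conclusion that $F$ is surjective whenever its Jacobian is everywhere nonsingular and its range is closed (a Hadamard-type global statement), whereas the paper's argument is more local, needing nonsingularity only at the norm-minimizing point of the range, and is the template the authors later reuse for the Lipschitz/natural-map variants (Corollary~\ref{Cor-Alternative to main theorem} and Theorem~\ref{Thm-main theorem-prime}), where a pointwise Clarke-type inverse is the natural tool.
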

\begin{proof}
    Let $b=\inf_{x\in \mathbb{R}^m} \|F(x)\|$ and let 
    $\{x^k\}_{k=1}^\infty$ be a sequence such that $\lim_{k\rightarrow \infty}\|F(x^k)\|=b$.
To arrive at a contradiction, assume that $b>0$. 

Since $\|F(x^k)\|\to b$,
it follows that $\{F(x^k)\}_{k=1}^\infty$ is bounded and has a convergent sub-sequence $\{F(x^{k_i})\}_{i=1}^\infty$ with $\lim_{i\rightarrow \infty}F(x^{k_i})=\bar{F}$, where $\|\bar{F}\|=b$. 
Since $\bar{F} \in F(\mathbb{R}^m)$ and $F(\mathbb{R}^m)$ is closed, there is some $\bar{x}\in \mathbb{R}^m$ such that $F(\bar{x})=\bar{F}$. Since $\|\bar{F}\|=b$ and $b>0$, we have that  $F(\bar{x})\ne 0$. By the assumption of the theorem, it follows that $|\nabla{F(\bar{x})}|\neq 0$. 

    By the Inverse mapping Theorem~\ref{Thm-Inverse function}, if $|\nabla{F(\bar x)}|\neq 0$, then there exist open balls $B_r(\bar x)$ and $B_{r'}(F(\bar x))$, and a local inverse mapping $F^{-1}_{\bar x}(\cdot):B_{r'}(F(\bar x))\to B_r(\bar x)$ such that $F^{-1}_{\bar x}(v)=u$ for all $v\in B_{r'}\left(F(\bar x)\right)$ and $u\in B_{r}(\bar x)$, where $F(u)=v$.
    Therefore, there exists $\alpha\in(0,1]$ such that $$(1-\alpha)F(\bar{x})\in B_{r'}\left(F(\bar{x})\right).$$ 
    Thus, by the inverse mapping theorem, we have that $F^{-1}_{\bar{x}}\left((1-\alpha)F(\bar{x})\right)=z$ for some $z\in B_{r}(\bar{x})$, such that $$F(z)=F\left(F^{-1}_{\bar{x}}\left((1-\alpha)F(\bar{x})\right)\right)=(1-\alpha)F(\bar{x}).$$ Hence $0\leq\|F(z)\|< \|F(\bar{x})\|=b$ with $z\in \mathbb{R}^m$ -- a contradiction in view of $b=\inf_{x\in \mathbb{R}^m} \|F(x)\|$. Therefore, $b=0$ and $F(\bar x)=0$, implying that $\bar{x}\in {\rm SOL} (\mathbb{R}^m,F)$.
\end{proof}

The next two lemmas provide sufficient conditions for $|\nabla{F(x)}| \neq 0$. In what follows, we use $[m]$ to denote the set $\{1,2,\ldots,m\}$ for an integer $m\ge1$.
Also, we use $\nabla_{x_i}f(x)$ to denote the partial derivative of a function $f$ with respect to variable $x_i$.

\begin{lemma}[Weak Coupling Condition]\label{Lem-Weak coupling}
Let $F:\mathbb{R}^m\to\mathbb{R}^m$ 
be a mapping given by
$F=(F_1,F_2,\ldots, F_m)$, where $F_i:\mathbb{R}^m\to\mathbb{R}$ is the $i$-th component of the mapping $F$ for all $i\in[m]$.
If at a given point $x$ it holds
\[\left|\nabla_{x_i} F_i(x)\right|>\sum_{\substack{j=1 \\ j \neq i}}^m\left|\nabla_{x_j}F_i(x)\right|
\qquad\hbox{for every $i\in [m]$},\]
 then $|\nabla{F(x)}| \neq 0$.
\end{lemma}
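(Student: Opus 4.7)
The plan is to recognize that the hypothesis is precisely the statement that the Jacobian matrix $\nabla F(x)$ is strictly row diagonally dominant, since its $(i,j)$-entry is $\nabla_{x_j} F_i(x)$. The conclusion $|\nabla F(x)| \neq 0$ is then the classical Levy--Desplanques theorem, and I would reproduce its short proof by contradiction.

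First, I would suppose $|\nabla F(x)| = 0$, so there exists a nonzero vector $v = (v_1,\ldots,v_m) \in \mathbb{R}^m$ with $\nabla F(x)\, v = 0$. Choose an index $k \in [m]$ at which $|v_k| = \max_{j\in[m]} |v_j|$; this maximum is strictly positive since $v\neq 0$.

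Next, I would write the $k$-th coordinate of the equation $\nabla F(x)\, v = 0$ as
\[\nabla_{x_k} F_k(x)\, v_k \;=\; -\sum_{\substack{j=1\\ j\neq k}}^m \nabla_{x_j} F_k(x)\, v_j,\]
apply the triangle inequality, and use $|v_j|\le |v_k|$ for all $j$ to obtain
\[\bigl|\nabla_{x_k} F_k(x)\bigr|\,|v_k| \;\le\; \sum_{\substack{j=1\\ j\neq k}}^m \bigl|\nabla_{x_j} F_k(x)\bigr|\,|v_k|.\]
Dividing by $|v_k|>0$ yields $|\nabla_{x_k} F_k(x)| \le \sum_{j\neq k} |\nabla_{x_j} F_k(x)|$, which directly contradicts the assumed strict inequality at $i=k$. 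Hence $|\nabla F(x)|\neq 0$.

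There is no serious obstacle here; the only thing to be careful about is the convention for the Jacobian (rows indexed by the output components $F_i$, columns by the input variables $x_j$), so that the hypothesis lines up with strict row diagonal dominance rather than column dominance. Once that indexing is fixed, the argument is the standard one and requires nothing beyond the triangle inequality.
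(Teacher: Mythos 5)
Your proof is correct, but it takes a more elementary route than the paper. The paper observes that the hypothesis says $0$ lies outside every Gershgorin disk of $\nabla F(x)$ and then invokes the Gershgorin circle theorem to conclude that $0$ is not an eigenvalue, hence $|\nabla F(x)|\neq 0$. You instead prove the needed nonsingularity directly via the Levy--Desplanques argument: assuming a nonzero null vector, selecting a coordinate of maximal modulus, and contradicting strict row diagonal dominance with the triangle inequality. The two are mathematically the same fact in different clothing --- the standard proof of Gershgorin's theorem is exactly your max-coordinate argument applied to an eigenvector --- so the paper's version buys brevity by citing a named theorem, while yours buys self-containedness and avoids any appeal to eigenvalue localization (in particular, you never need to discuss complex eigenvalues of a real matrix, since a real singular matrix has a real null vector). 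Your closing remark about the row-versus-column convention is well taken and consistent with the paper's indexing, where $\nabla_{x_j}F_i(x)$ is the $(i,j)$ entry and the hypothesis is strict \emph{row} diagonal dominance.
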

 
\begin{proof}   
Denoting $M_{ij}$ as the $ij$-th element of the square matrix $M$ of dimension $m$, let us define the Gershgorin disk $C_i\left(M_{ii}, \sum_{\substack{j=1 \\ j \neq i}}^m\left|M_{ij}\right|\right)\subset \mathbb{C}$ for the matrix $M$ as a disk centered at $M_{ii}$ with the radius of $\sum_{\substack{j=1, j \neq i}}^m\left|M_{ij}\right|$. By Gershgorin circle Theorem \cite{bell1965gershgorin}, every eigenvalue of the Jacobian $\nabla F(x)$ lies in one of the Gershgorin disks $C_i\left(\nabla_{x_i} F_i(x), \sum_{\substack{j=1 \\ j \neq i}}^m\left|\nabla_{x_j} F_i(x)\right|\right)$, $ i \in[m]$. Under the weak coupling condition, it follows that  $0$ is not in any of Gershgorin disks. Thus, $0$ is not an eigenvalue of $\nabla F(x)$, implying that $|\nabla{F(x)}| \neq 0$.
\end{proof} 



The following lemma shows that a wide range of non-monotone VIs have a solution in the unconstrained case. 

\begin{lemma}\label{Lem-Alternative to be non singular}
Let $F:\mathbb{R}^m\to\mathbb{R}^m.$ Then, for any $x\in\mathbb{R}^m$, we have    $\left|\nabla{F(x)}\right| \neq 0$ if and only if the matrix $\nabla{F(x)} \nabla{F(x)}^T$ is positive definite.
 \end{lemma}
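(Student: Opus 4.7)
The plan is to reduce the claim to the standard identity $v^{T}A A^{T} v = \|A^{T} v\|^{2}$ applied with $A = \nabla F(x)$, together with the elementary fact that an $m\times m$ matrix is invertible if and only if its transpose has trivial kernel. Since the lemma is a pointwise statement, I would fix $x \in \mathbb{R}^m$, set $A := \nabla F(x)$, and prove the equivalence $\det(A) \neq 0 \iff A A^{T} \succ 0$ purely as a linear-algebra fact.

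First, I would note that $A A^{T}$ is always symmetric and positive semidefinite, because for every $v \in \mathbb{R}^m$ we have $v^{T} A A^{T} v = (A^{T} v)^{T}(A^{T} v) = \|A^{T} v\|^{2} \ge 0$. So the question of positive definiteness reduces to whether $\|A^{T} v\|^{2} > 0$ for all $v \neq 0$, i.e., whether $A^{T}$ has trivial kernel.

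For the forward direction ($\Rightarrow$), if $|\nabla F(x)| = \det(A) \neq 0$, then $\det(A^{T}) = \det(A) \neq 0$, so $A^{T}$ is injective. Hence $v \neq 0$ implies $A^{T} v \neq 0$, and thus $v^{T} A A^{T} v = \|A^{T} v\|^{2} > 0$, giving $A A^{T} \succ 0$. For the reverse direction ($\Leftarrow$), positive definiteness of $A A^{T}$ means $\|A^{T} v\|^{2} > 0$ for every $v \neq 0$, so $\ker(A^{T}) = \{0\}$, and since $A^{T}$ is a square matrix this forces $\det(A^{T}) \neq 0$, hence $\det(A) \neq 0$.

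There is really no main obstacle here; the whole content of the lemma is the identity $v^{T} A A^{T} v = \|A^{T} v\|^{2}$ plus the fact that a square matrix has nonzero determinant iff it is injective. The only minor care needed is to make sure both implications are recorded cleanly and that symmetry of $A A^{T}$ is mentioned so that the phrase "positive definite" is unambiguous.
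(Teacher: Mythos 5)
Your proof is correct, but it runs along a slightly different track than the paper's. The paper's argument is determinant-based: it uses the identity $|\nabla F(x)|^2=|\nabla F(x)\,\nabla F(x)^T|$, so that $|\nabla F(x)|\neq 0$ holds exactly when $|\nabla F(x)\,\nabla F(x)^T|\neq 0$, and then observes that $\nabla F(x)\,\nabla F(x)^T$ is symmetric positive semidefinite ``due to its form,'' so a nonzero determinant (equivalently, all eigenvalues positive) is the same as positive definiteness. Your argument instead goes through the quadratic form and the kernel: $v^T A A^T v=\|A^T v\|^2$ shows positive semidefiniteness directly, and positive definiteness is then equivalent to $\ker(A^T)=\{0\}$, i.e.\ to $\det(A^T)=\det(A)\neq 0$. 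The two routes are equally elementary and both hinge on the same underlying fact that $AA^T$ is symmetric PSD; yours is a bit more self-contained in that it does not invoke the spectral characterization of positive definiteness for symmetric matrices (eigenvalues all positive iff determinant nonzero for a PSD matrix), replacing it with the injectivity characterization of invertibility. Either writeup is acceptable; your explicit verification of the semidefiniteness via $\|A^T v\|^2\ge 0$ is, if anything, slightly more complete than the paper's appeal to the ``form'' of the matrix.
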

 \begin{proof} We have that
$|\nabla{F(x)}|^2=|\nabla{F(x)} \nabla{F(x)}^T|$. Thus, the determinant $|\nabla{F(x)}|^2$ is zero if and only if $|\nabla{F(x)} \nabla{F(x)}^T|=0$. So, alternatively we can consider the matrix $\nabla{F(x)}\nabla{F(x)}^T$, which is symmetric. Hence, all the eigenvalues of $\nabla{F(x)} \nabla{F(x)}^T$ are real and the matrix is positive semi-definite due to its form.  Therefore, $\left|\nabla{F(x)}\right| \neq 0$ is equivalent to $\nabla{F(x)} \nabla{F(x)}^T$ being positive definite.
 \end{proof}

 We have the following result as an immediate consequence of Theorem~\ref{Thm-main theorem} and Lemma~\ref{Lem-Alternative to be non singular}.
 In the result, we use $I$ to denote the identity mapping.
\begin{corollary}\label{Cor-Orthogonal map}
If $\nabla{F(x)}^{-1}=\nabla{F(x)}^T$ for all $x\in\mathbb{R}^m$ (orthogonal mapping $F$) and $F(\mathbb{R}^{m})$ is closed,
then $\nabla F(x)\nabla F(x)^T=I$ and the
VI$\left(\mathbb{R}^m, F\right)$ has a solution.
\end{corollary}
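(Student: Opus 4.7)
The proof will be a direct two-step application of Lemma~\ref{Lem-Alternative to be non singular} followed by Theorem~\ref{Thm-main theorem}, so the plan is essentially routine. My intention is to first convert the orthogonality hypothesis into a statement about $\nabla F(x)\nabla F(x)^{T}$, then invoke the two earlier results without any additional argument.

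The first step is to derive $\nabla F(x)\nabla F(x)^{T}=I$ from the assumption $\nabla F(x)^{-1}=\nabla F(x)^{T}$. Multiplying both sides of this identity on the left by $\nabla F(x)$ yields $I=\nabla F(x)\nabla F(x)^{T}$, which is exactly the first conclusion of the corollary. In particular, this shows $\nabla F(x)\nabla F(x)^{T}$ is the identity matrix, which is positive definite. By Lemma~\ref{Lem-Alternative to be non singular}, positive definiteness of $\nabla F(x)\nabla F(x)^{T}$ is equivalent to $|\nabla F(x)|\ne 0$, and this holds for every $x\in\mathbb{R}^m$ (in particular, at every $x$ where $F(x)\neq 0$).

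The second step is to invoke Theorem~\ref{Thm-main theorem}. The hypothesis that $\nabla F(x)^{-1}$ exists and equals $\nabla F(x)^{T}$ at every $x$ carries with it the implicit assumption that $F$ is continuously differentiable on $\mathbb{R}^m$, so the regularity hypothesis of the theorem is satisfied. Combined with the standing assumption that $F(\mathbb{R}^m)$ is closed and the nonsingularity of $\nabla F(x)$ established in the first step, Theorem~\ref{Thm-main theorem} immediately yields that VI$(\mathbb{R}^m,F)$ has a solution.

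There is no real obstacle; the only thing worth being careful about is flagging that the orthogonality assumption presumes $F$ is differentiable everywhere, so that Theorem~\ref{Thm-main theorem} applies without qualification. The entire proof should fit in a few lines.
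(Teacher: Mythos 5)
Your proposal is correct and follows exactly the route the paper intends: the paper presents this corollary without a written proof, describing it as an immediate consequence of Theorem~\ref{Thm-main theorem} and Lemma~\ref{Lem-Alternative to be non singular}, which is precisely the two-step argument you give. Your remark that the orthogonality hypothesis tacitly supplies the (continuous) differentiability needed for Theorem~\ref{Thm-main theorem} is a reasonable point of care, consistent with how the paper uses the result.
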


\def\argmin{\mathop {\rm argmin}}

In some applications, we might be interested in finding a solution for the constrained VI problem, i.e., VI$(K,F)$ where $K$ is a nonempty, closed, and  convex set but not necessarily bounded. To proceed with such scenarios, we will use an alternative representation of the VI$(K,F)$ problem to relate it to finding a zero of a suitably defined mapping.  The natural mapping associated with a VI$(K,F)$ plays a key role in the reformulation, which is defined as follows:
\[F_K^{nat}(v)=v-\Pi_K\left[v-F(v)\right]
\qquad\hbox{for all $v\in\mathbb{R}^m$},\] where $F:K\to\mathbb{R}^m$ and $\Pi_K[\cdot]$ is the Euclidean projection on the closed convex set $K\subseteq\mathbb{R}^m$, i.e., $\Pi_K[z]=\argmin_{x\in K}\|x-z\|^2$.
The following theorem relates the solutions of a VI problem with the zeros of its associated natural mapping.
\begin{theorem}[Proposition 1.5.8 \cite{facchinei2003finite}]\label{Thm-auxiliary-natural map solution}
    Let $F:K\rightarrow \mathbb{R}^m$ be a mapping defined on a set $K\subseteq \mathbb{R}^m$ which is nonempty, closed and convex. Then, we have
    \begin{equation}
        [x^*\in {\rm SOL}(K,F)] \iff [F_K^{nat}(x^*)=0].
    \end{equation}
\end{theorem}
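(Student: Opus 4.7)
The plan is to use the standard variational characterization of the Euclidean projection onto a closed convex set, namely that $y=\Pi_K[z]$ if and only if $y\in K$ and $\langle z-y,x-y\rangle\le 0$ for all $x\in K$. This single fact will drive both implications, because the VI condition $\langle F(x^*),x-x^*\rangle\ge 0$ is structurally identical (up to a sign) to the projection inequality once we take $z=x^*-F(x^*)$ and $y=x^*$.

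For the forward direction, I would assume $x^*\in\mathrm{SOL}(K,F)$ and verify that $x^*$ itself satisfies the two defining conditions of $\Pi_K[x^*-F(x^*)]$. Membership $x^*\in K$ is immediate from the VI definition. The obtuse-angle condition reduces to $\langle -F(x^*),x-x^*\rangle\le 0$ for all $x\in K$, which is exactly the VI inequality. Uniqueness of the projection then forces $\Pi_K[x^*-F(x^*)]=x^*$, i.e., $F_K^{nat}(x^*)=0$.

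For the reverse direction, I would start from $F_K^{nat}(x^*)=0$, which gives $x^*=\Pi_K[x^*-F(x^*)]\in K$. Applying the same variational characterization with $z=x^*-F(x^*)$ and $y=x^*$, the defining inequality of the projection reads $\langle (x^*-F(x^*))-x^*,x-x^*\rangle\le 0$ for every $x\in K$, which simplifies to $\langle F(x^*),x-x^*\rangle\ge 0$ for all $x\in K$. Hence $x^*\in\mathrm{SOL}(K,F)$.

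There is essentially no obstacle here beyond quoting (or briefly recalling) the projection characterization, which requires only nonemptiness, closedness, and convexity of $K$ — precisely the hypotheses assumed. If a self-contained treatment is desired, the projection inequality itself can be obtained in one line from the first-order optimality condition for the strongly convex problem $\min_{x\in K}\tfrac12\|x-z\|^2$, so no additional machinery is needed.
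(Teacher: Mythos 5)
Your proof is correct; the paper itself gives no argument for this statement, simply citing Proposition 1.5.8 of Facchinei and Pang, and your derivation via the variational characterization of the projection, $y=\Pi_K[z]$ iff $y\in K$ and $\langle z-y,x-y\rangle\le 0$ for all $x\in K$, applied with $z=x^*-F(x^*)$ and $y=x^*$, is exactly the standard proof of that cited result. Both directions are handled cleanly and the hypotheses (nonempty, closed, convex $K$) are used precisely where needed, so nothing is missing.
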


Depending on the set $K$, the mapping $F_K^{nat}(\cdot)$ may not be differentiable. However, 
due to the non-expansiveness property of the projection mapping, we can see that $F_K^{nat}(\cdot)$ is Lipschitz continuous whenever $F$ is 
Lipschitz continuious. Therefore, to obtain an alternative to Theorem~\ref{Thm-Inverse function} where the mapping differentiability assumption is relaxed, we will assume the Lipschitz continuity of $F$. To develop such a theorem, we first define several concepts in the sequel.
\begin{definition}[Generalized Jacobian, Definition 1 \cite{clarke1976inverse}] \label{Def-Generalized Jacobian}
The generalized Jacobian of a mapping $F$ at point $x_0\in \mathbb{R}^m$, denoted by $\partial F(x_0)$, is the convex hull of all matrices $M$ of the form of 
\begin{equation}
\label{eq-gen-jac}
    M=\lim_{i\rightarrow \infty} \nabla F(x_i),
\end{equation}
where $\lim_{i\to\infty}x_i=x_0$ and $F$ is differentiable at $x_i$ for all $i$.
\end{definition}

We use the following result for the generalized Jacobian.

\begin{theorem}[Proposition 1 \cite{clarke1976inverse}]\label{Thm-Proposition 1 of Clark paper}
    Let mapping $F$ be Lipschitz continuous in a neighborhood of a point $x_0\in \mathbb{R}^m$. Then, the generalized Jacobian $\partial F(x_0)$ is a nonempty, compact, and convex set.
\end{theorem}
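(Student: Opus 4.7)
The plan is to establish the three claimed properties of $\partial F(x_0)$ separately. The key ingredients are Rademacher's theorem (a locally Lipschitz mapping on $\mathbb{R}^m$ is differentiable almost everywhere), the uniform bound $\|\nabla F(x)\|\le L$ wherever the Jacobian exists (with $L$ the local Lipschitz constant of $F$ near $x_0$), and Carath\'eodory's theorem, which guarantees that the convex hull of a compact subset of a finite-dimensional space is compact.

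Convexity is immediate from Definition~\ref{Def-Generalized Jacobian}: $\partial F(x_0)$ is defined as a convex hull, so it is convex. For nonemptiness, I would invoke Rademacher's theorem to select a sequence $\{x_i\}\to x_0$ such that $F$ is differentiable at every $x_i$; such a sequence exists because the set of non-differentiability points has Lebesgue measure zero in any neighborhood of $x_0$, so it cannot contain a whole neighborhood of $x_0$. The Lipschitz property then forces $\|\nabla F(x_i)\|\le L$ for all $i$, so by Bolzano--Weierstrass in $\mathbb{R}^{m\times m}$ some subsequence $\{\nabla F(x_{i_k})\}$ converges to a matrix $M$. This $M$ is of the form appearing in~\eqref{eq-gen-jac}, hence lies in $\partial F(x_0)$.

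For compactness, let $S$ denote the set of matrices of the form $M=\lim_{i\to\infty}\nabla F(x_i)$ with $x_i\to x_0$ and $F$ differentiable at each $x_i$. The uniform Lipschitz bound shows $S\subseteq\{M:\|M\|\le L\}$, so $S$ is bounded. For closedness, I would run a diagonal argument: given $\{M_n\}\subseteq S$ with $M_n\to M$, for each $n$ pick a point $y_n$ with $\|y_n-x_0\|<1/n$, with $F$ differentiable at $y_n$, and with $\|\nabla F(y_n)-M_n\|<1/n$, which is possible by the very definition of $M_n$ as the limit of Jacobians along a sequence converging to $x_0$. Then $y_n\to x_0$ and $\nabla F(y_n)\to M$, so $M\in S$. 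Hence $S$ is compact, and Carath\'eodory's theorem applied in $\mathbb{R}^{m\times m}$ yields that $\partial F(x_0)=\mathrm{conv}(S)$ is compact as well.

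The principal obstacle, and the only non-elementary ingredient, is the appeal to Rademacher's theorem, which is precisely what turns the abstract definition of $\partial F(x_0)$ via limits of Jacobians into a nonvacuous construction under the mere assumption of Lipschitz continuity. The diagonal argument for closedness is routine but must be carried out carefully so that the chosen $y_n$ simultaneously satisfy the proximity, differentiability, and Jacobian-approximation requirements; and the passage from compactness of $S$ to compactness of its convex hull is exactly where finite-dimensionality (through Carath\'eodory) is used.
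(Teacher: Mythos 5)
Your proof is correct. Note that the paper does not prove this statement at all---it imports it verbatim as Proposition~1 of Clarke (1976)---and your argument (convexity from the definition, nonemptiness via Rademacher plus Bolzano--Weierstrass under the bound $\|\nabla F(x)\|\le L$, closedness of the set of limit matrices by a diagonal extraction, and compactness of the convex hull via Carath\'eodory) is precisely the standard proof of that cited result, so there is nothing to flag.
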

\begin{definition}[Definition 2 \cite{clarke1976inverse}] \label{Def-fullrank Generalized Jacobian}
A generalized Jacobian
$\partial F(x_0)$ is said to be of maximal rank if every matrix $M$ in the definition of $\partial F(x_0)$ (see~\eqref{eq-gen-jac}) has a full rank.
\end{definition}
The following theorem, known as the Clark inverse mapping Theorem, is the key to extending our Theorem~\ref{Thm-main theorem}.

\begin{theorem}[Clark Inverse mapping - Theorem 1, \cite{clarke1976inverse}] \label{Thm-Clark inverse mapping theorem}
Let $F:\mathbb{R}^m\to\mathbb{R}^m$ be a mapping.
    Let $\partial F(x_0)$ be of a maximal rank for some $x_0\in\mathbb{R}^m$. Then, there exist neighborhoods $U$ and $V$ of $x_0$ and  $F(x_0)$, respectively, and a Lipschitz continuous mapping $G:V\to \mathbb{R}^m$ such that 
    \begin{itemize}
        \item [a)] $G\left(F(u)\right)=u$ for every $u\in U$,
        \item [b)] $F\left(G(v)\right)=v$ for every $v\in V$.
    \end{itemize}
\end{theorem}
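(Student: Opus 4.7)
The plan is to combine three ingredients: (i) a mean-value inequality for Lipschitz maps expressed through the generalized Jacobian, (ii) a uniform nonsingularity estimate in a neighborhood of $x_0$ coming from upper semicontinuity of $\partial F(\cdot)$, and (iii) a variational argument for local surjectivity. Together these will yield a bijective, bi-Lipschitz correspondence between the prescribed neighborhoods $U$ and $V$, from which both inverse identities and the Lipschitz property of $G$ are immediate.

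First, I would record two consequences of the maximal rank hypothesis. Since $\partial F(x_0)$ is compact (Theorem~\ref{Thm-Proposition 1 of Clark paper}) and consists entirely of nonsingular $m\times m$ matrices, there is a uniform constant $\beta>0$ such that $\|Mw\|\geq \beta\|w\|$ for every $M\in\partial F(x_0)$ and every $w\in\mathbb{R}^m$. The set-valued map $x\mapsto\partial F(x)$ is upper semicontinuous with closed graph, so one can choose an open ball $U=B_\rho(x_0)$ on which every element of the convex hull of $\bigcup_{x\in U}\partial F(x)$ still satisfies $\|Mw\|\geq(\beta/2)\|w\|$. This is the workhorse estimate.

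Second, I would apply the Lipschitz mean-value inequality: for any $u_1,u_2\in U$,
\[
F(u_1)-F(u_2)\in \mathrm{co}\bigl(\partial F([u_1,u_2])\bigr)\,(u_1-u_2),
\]
which, combined with the uniform bound above, gives $\|F(u_1)-F(u_2)\|\geq (\beta/2)\|u_1-u_2\|$. This simultaneously establishes injectivity of $F$ on $U$ and Lipschitz continuity (with constant $2/\beta$) of the inverse mapping, provided the inverse exists.

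Finally, to produce a neighborhood $V$ of $F(x_0)$ inside $F(U)$, I would use a variational approach: for each $v$ close to $F(x_0)$, minimize $u\mapsto \|F(u)-v\|^{2}$ over the closed ball $\overline{B_{\rho/2}(x_0)}$. The minimizer $u^{*}$ exists by continuity and compactness, and a quantitative estimate from the previous step forces $u^{*}$ to be interior whenever $\|v-F(x_0)\|$ is small enough. Applying Clarke's necessary conditions at such an interior minimum yields $M^{T}(F(u^{*})-v)=0$ for some $M\in\partial F(u^{*})$; maximal rank on $U$ then forces $F(u^{*})=v$, proving local surjectivity. Taking $V$ to be the resulting open neighborhood of $F(x_0)$ and setting $G=F^{-1}$ on $V$, both identities (a) and (b) hold tautologically, and the Lipschitz bound from step two finishes the argument. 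The main obstacle I anticipate is the surjectivity step: the maximal-rank assumption is purely infinitesimal and does not directly yield an open mapping property, so one must either import Newton-type machinery (which requires a semismoothness-like control of the discrepancy $F(u)-F(x_0)-M_0(u-x_0)$) or, as proposed here, recast surjectivity as a minimization whose first-order conditions alone are strong enough to force the residual to vanish.
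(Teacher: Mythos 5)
The paper does not prove this statement at all: it is imported verbatim as Theorem~1 of Clarke's 1976 paper \cite{clarke1976inverse}, so there is no in-paper argument to compare against. Your proposal is, in substance, a correct reconstruction of the standard proof of Clarke's inverse function theorem, and the three ingredients you isolate are exactly the right ones: compactness of $\partial F(x_0)$ plus maximal rank gives a uniform bound $\|Mw\|\ge\beta\|w\|$; upper semicontinuity of $x\mapsto\partial F(x)$ propagates the bound (with constant $\beta/2$) to $\mathrm{co}\bigl(\bigcup_{x\in U}\partial F(x)\bigr)$ on a small ball $U$; the vector mean-value inclusion then yields $\|F(u_1)-F(u_2)\|\ge(\beta/2)\|u_1-u_2\|$, i.e., injectivity and the Lipschitz bound on the inverse; and the penalized minimization of $u\mapsto\|F(u)-v\|^2$ over $\overline{B_{\rho/2}(x_0)}$, with the first-order condition $M^T(F(u^*)-v)=0$ for some $M\in\partial F(u^*)$ and nonsingularity of $M$, correctly delivers local surjectivity once $\|v-F(x_0)\|$ is small enough to force the minimizer into the interior (your boundary estimate $\|F(u^*)-F(x_0)\|\ge\beta\rho/4$ versus $g(x_0)=\|F(x_0)-v\|^2$ does this). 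Two small points you should make explicit: first, the theorem tacitly requires $F$ to be Lipschitz near $x_0$ (otherwise $\partial F(x_0)$ is not defined and the mean-value inclusion and upper semicontinuity are unavailable) --- this hypothesis is also missing from the paper's statement, which only says ``a mapping''; second, for identity (a) you must shrink the final neighborhood of $x_0$ to $U\cap F^{-1}(V)$ so that $G(F(u))$ is actually defined for every $u$ in it. Neither point is a genuine gap, and your closing remark correctly identifies surjectivity as the step that cannot be obtained from the infinitesimal hypothesis alone.
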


The following corollary gives a sufficient condition to have a solution for VI$(K,F)$.

\begin{corollary}\label{Cor-Alternative to main theorem}
    Let set $K\subseteq\mathbb{R}^m$ be nonempty closed convex and let $F_K^{nat}(\mathbb{R}^m)$ be a closed set. Also, assume that $\partial F_K^{nat}(x)$ has a maximal rank for every $x\in \mathbb{R}^m$ where $F_K^{nat}(x)\neq 0$. Then, the VI$(K,F)$ has a solution.
\end{corollary}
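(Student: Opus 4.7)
The plan is to mirror the proof of Theorem~\ref{Thm-main theorem}, substituting the natural mapping $F_K^{nat}$ for $F$, using Clark's Inverse Mapping Theorem~\ref{Thm-Clark inverse mapping theorem} in place of the classical Inverse Mapping Theorem~\ref{Thm-Inverse function}, and finally invoking Theorem~\ref{Thm-auxiliary-natural map solution} to transfer a zero of $F_K^{nat}$ back to a solution of VI$(K,F)$.

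More concretely, set $b=\inf_{x\in\mathbb{R}^m}\|F_K^{nat}(x)\|$ and pick a sequence $\{x^k\}$ with $\|F_K^{nat}(x^k)\|\to b$. Since $\{F_K^{nat}(x^k)\}$ is bounded in $\mathbb{R}^m$, a subsequence converges to some $\bar F$ with $\|\bar F\|=b$, and the closedness of $F_K^{nat}(\mathbb{R}^m)$ produces $\bar x\in\mathbb{R}^m$ with $F_K^{nat}(\bar x)=\bar F$. Suppose for contradiction $b>0$; then $F_K^{nat}(\bar x)\ne 0$, so by the standing assumption $\partial F_K^{nat}(\bar x)$ has maximal rank.

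Now I would apply Theorem~\ref{Thm-Clark inverse mapping theorem} at $\bar x$, obtaining neighborhoods $U$ of $\bar x$ and $V$ of $F_K^{nat}(\bar x)$, together with a Lipschitz inverse $G:V\to\mathbb{R}^m$ satisfying $F_K^{nat}(G(v))=v$ for all $v\in V$. Because $V$ is a neighborhood of $F_K^{nat}(\bar x)\ne 0$, there exists $\alpha\in(0,1]$ such that $(1-\alpha)F_K^{nat}(\bar x)\in V$. Setting $z=G\bigl((1-\alpha)F_K^{nat}(\bar x)\bigr)$ yields
\[
\|F_K^{nat}(z)\|=(1-\alpha)\|F_K^{nat}(\bar x)\|<\|F_K^{nat}(\bar x)\|=b,
\]
which contradicts the definition of $b$. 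Hence $b=0$ and $F_K^{nat}(\bar x)=0$; by Theorem~\ref{Thm-auxiliary-natural map solution} this means $\bar x\in{\rm SOL}(K,F)$.

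The main obstacle, and the only nontrivial departure from the proof of Theorem~\ref{Thm-main theorem}, is justifying the applicability of Clark's theorem: one needs $F_K^{nat}$ to be Lipschitz near $\bar x$ so that $\partial F_K^{nat}(\bar x)$ is a nonempty compact convex set in the sense of Theorem~\ref{Thm-Proposition 1 of Clark paper}. This is handled by the remark preceding Definition~\ref{Def-Generalized Jacobian}: the non-expansiveness of $\Pi_K$ makes $F_K^{nat}$ Lipschitz whenever $F$ is, and the hypothesis that $\partial F_K^{nat}(\bar x)$ has maximal rank is implicitly a statement about a well-defined, nonempty generalized Jacobian. Aside from this regularity bookkeeping, the argument is a verbatim analog of the Theorem~\ref{Thm-main theorem} proof: the infimum of $\|F_K^{nat}\|$ is attained thanks to closedness of $F_K^{nat}(\mathbb{R}^m)$, and the local invertibility of $F_K^{nat}$ at any point of nonzero value rules out a positive infimum by scaling the image toward $0$.
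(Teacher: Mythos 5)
Your proposal is correct and follows essentially the same route as the paper, which simply states that the proof mirrors that of Theorem~\ref{Thm-main theorem} with $F_K^{nat}$ in place of $F$, Clark's inverse mapping theorem in place of the classical one, and Theorem~\ref{Thm-auxiliary-natural map solution} to convert the zero of $F_K^{nat}$ into a solution of VI$(K,F)$. Your explicit attention to the Lipschitz continuity of $F_K^{nat}$ (needed for the generalized Jacobian and Clark's theorem to apply) is a detail the paper leaves implicit, and is a welcome addition rather than a deviation.
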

\begin{proof}
    The proof follows from steps similar to that of the proof of Theorem~\ref{Thm-main theorem}, where we consider $F_K^{nat}(x)$ instead of $F(x)$ and use the Clark inverse mapping theorem instead of  the inverse mapping theorem. Also, we use the connection between the zeros of the natural mapping $F_K^{nat}(\cdot)$ and the solutions of the VI$(K,F)$ as given in Theorem~\ref{Thm-auxiliary-natural map solution}.
\end{proof}

Using Definition \ref{Def-Close map} of a closed mapping, we can not immediately make a conclusion regarding the closedness of a mapping $F_K^{nat}$ given the fact that $F$ is a closed mapping, even closeness of $F(\mathbb{R}^{m})$ may not directly guarantee the closeness of $F_K^{nat}(\mathbb{R}^{m})$. In the sequel, we will explore sole continuity as an alternative to the closed mappings which can be guaranteed for the natural mapping of a given mapping considering non-expansive property of projection mapping. A continuous mapping need not be closed in the sense of Definition~\ref{Def-Close map}. 




To obtain an alternative to Theorem~\ref{Thm-main theorem} focusing only on its continuity, we need an additional assumption, as stated below.

\begin{assumption}\label{Assum-Semi coercivity}
Let $F:\mathbb{R}^m\to\mathbb{R}^m$ be such that,
for every sequence $\{x^k\}\subset\mathbb{R}^m$,
if
     $\{\|F(x^k)\|\}$ is bounded, then $\{\|x^k\|\}$ is also bounded.
 \end{assumption}

\begin{remark}\label{rem-coercive}
    Assumption~\ref{Assum-Semi coercivity}  is met when $\|F(x)\|$ is a coercive function of $x$, i.e., $\lim_{\|x\|\to\infty} \|F(x)\|=+\infty$.
\end{remark}

Now, we have the following theorem.
\begin{theorem} \label{Thm-main theorem-prime}
    Let $F:\mathbb{R}^m\rightarrow\mathbb{R}^m$ be continuously differentiable. Also, let Assumption~\ref{Assum-Semi coercivity} hold and assume that 
    $|\nabla{F(x)}|\neq 0$
    for every $x\in \mathbb{R}^m$ where $F(x)\neq 0$. Then,  the $VI(\mathbb{R}^m,F)$ has a solution.
\end{theorem}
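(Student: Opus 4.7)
The plan is to mirror the proof of Theorem \ref{Thm-main theorem} almost verbatim, substituting Assumption \ref{Assum-Semi coercivity} and the continuity of $F$ for the closedness of $F(\mathbb{R}^m)$ as the mechanism that produces a point at which the infimum of $\|F\|$ is attained. As before, set $b=\inf_{x\in\mathbb{R}^m}\|F(x)\|$, pick a minimizing sequence $\{x^k\}$ with $\|F(x^k)\|\to b$, and argue by contradiction assuming $b>0$.

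The only substantive change occurs in how one extracts a candidate minimizer $\bar x$. In the proof of Theorem \ref{Thm-main theorem} one first passed to a convergent sub-sequence of $\{F(x^k)\}$ (which is bounded) and then used closedness of $F(\mathbb{R}^m)$ to pull the limit back to some $\bar x\in\mathbb{R}^m$. Here I would instead observe that $\{\|F(x^k)\|\}$ is convergent and hence bounded, so by Assumption \ref{Assum-Semi coercivity} the sequence $\{x^k\}$ itself is bounded in $\mathbb{R}^m$. Extract a convergent sub-sequence $x^{k_i}\to\bar x$; since $F$ is (continuously differentiable, hence) continuous, $F(x^{k_i})\to F(\bar x)$, and therefore $\|F(\bar x)\|=b>0$. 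In particular $F(\bar x)\neq 0$, so the hypothesis gives $|\nabla F(\bar x)|\neq 0$.

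The remainder of the proof is identical to that of Theorem \ref{Thm-main theorem}. Applying the Inverse Mapping Theorem \ref{Thm-Inverse function} at $\bar x$ yields open balls $B_r(\bar x)$ and $B_{r'}(F(\bar x))$ and a local inverse $F^{-1}_{\bar x}:B_{r'}(F(\bar x))\to B_r(\bar x)$. Choose $\alpha\in(0,1]$ small enough so that $(1-\alpha)F(\bar x)\in B_{r'}(F(\bar x))$, and set $z=F^{-1}_{\bar x}\bigl((1-\alpha)F(\bar x)\bigr)$. Then $F(z)=(1-\alpha)F(\bar x)$, so $\|F(z)\|=(1-\alpha)b<b$, contradicting the definition of $b$. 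Hence $b=0$, and the cluster point $\bar x$ constructed above satisfies $F(\bar x)=0$, so $\bar x\in\mathrm{SOL}(\mathbb{R}^m,F)$.

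There is no serious obstacle here; the argument is a light re-arrangement of the proof of Theorem \ref{Thm-main theorem}. The only place where one must be careful is to apply semi-coercivity to the $x^k$'s \emph{before} passing to a sub-sequence (so that the whole minimizing sequence is bounded to begin with) and then to invoke continuity of $F$ to identify $\|F(\bar x)\|$ with $b$; the closedness hypothesis of Theorem \ref{Thm-main theorem} is what was doing this job in the original argument, and Assumption \ref{Assum-Semi coercivity} plus continuity is a clean substitute.
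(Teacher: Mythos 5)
Your proposal is correct and follows essentially the same route as the paper: both use Assumption~\ref{Assum-Semi coercivity} plus continuity of $F$ to extract a limit point $\bar x$ with $\|F(\bar x)\|=b$, and then run the identical inverse-mapping contradiction from Theorem~\ref{Thm-main theorem}. The only cosmetic difference is that you apply semi-coercivity directly to the minimizing sequence $\{x^k\}$, whereas the paper first passes to a convergent sub-sequence of $\{F(x^k)\}$; both orderings are valid.
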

\begin{proof}
Let $b=\inf_{x\in \mathbb{R}^m} \|F(x)\|$ and  let $\{x^k\}_{k=1}^\infty$ be a sequence such that $\lim_{k\rightarrow \infty}\|F(x^k)\|=b$. 
Thus, the sequence $\{F(x^k)\}_{k=1}^\infty$ is bounded and has a convergent sub-sequence $\{F(x^{k_i})\}_{i=1}^\infty$ with $\lim_{i\rightarrow \infty}F(x^{k_i})=\bar{F}$, where $\|\bar{F}\|=b$. Moreover, by Assumption~\ref{Assum-Semi coercivity}, the sequence $\{x^{k_i}\}_{i=1}^\infty$ is also bounded and, consequently, has a sub-sequence converging to some $\bar{x}$. Along this sub-sequence, the mapping values $F(x^{k_i})$ are also converging to $\bar{F}$. Without loss of generality, we may assume that $\lim_{i\to\infty}x^{k_i}=\bar x$ and  $\lim_{i\to\infty}F(x^{k_i})=\bar F$. By the continuity of $F$ it follows that $F(\bar x)=\bar F$, where $\|\bar F\|=b$.

To arrive at a contradiction, we assume that $b>0$. By the Inverse mapping Theorem (Theorem~\ref{Thm-Inverse function}), since $|\nabla{F(\bar x)}|\neq 0$, there are open balls $B_r(\bar x)$ and $B_{r'}\left(F(\bar x)\right)$, and a locally invertible mapping $F^{-1}_{\bar x}(v)$ (for the mapping $F$) such that $F^{-1}_{\bar x}(v)=u$ for all $v\in B_{r'}\left(F(\bar x)\right)$ and $u\in B_{r}(\bar x)$, where $F(u)=v$.
From now onward, the proof follows the same line of analysis as that of Theorem~\ref{Thm-main theorem}, leading to a contradiction that   
$0\leq\|F(z)\|< \|F(\bar{x})\|=b$ for some $z\in \mathbb{R}^m$. Therefore,  we must have $b=0$ and $F(\bar x)=\bar{F}=0$, implying that $\bar{x}$ is a solution of the VI$(\mathbb{R}^m,F)$.
\end{proof}

The following theorem provides sufficient conditions for the existence of a Minty solution to VI$(K,F)$. We note that a VI$(K,F)$ with a closed convex set $K$ and strongly monotone mapping $F$ always has a unique solution (Theorem~2.3.3 in \cite{facchinei2003finite}).

\begin{theorem}\label{Thm-Minty sol}
Let set $K\subseteq\mathbb{R}^m$ be nonempty, closed, and convex, and let $\varphi:K\to\mathbb{R}^m$ be a strongly monotone mapping with $\Tilde{x}\in {\rm SOL}(K,\varphi)$.
Assume that $\|\varphi(x)-F(x)\|\le d\|x-\Tilde{x}\|$ for some $d <\mu_{\varphi}$ and for all $x\in K$, where $\mu_{\varphi}$ is the strong monotonicity constant of the mapping $\varphi$.
Then, $\Tilde{x}$ is a Minty solution to VI$(K, F)$.
\end{theorem}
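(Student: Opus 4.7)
The plan is to show directly that $\langle F(x), x - \tilde{x}\rangle \geq 0$ for every $x\in K$, which is precisely the definition of $\tilde{x}$ being a Minty solution to VI$(K,F)$. The idea is to treat $F$ as a perturbation of the strongly monotone mapping $\varphi$, write $F = \varphi - (\varphi - F)$, and show that the strong monotonicity of $\varphi$ provides a quadratic lower bound in $\|x-\tilde{x}\|$ that dominates the linear perturbation term controlled by the assumption $\|\varphi(x)-F(x)\|\le d\|x-\tilde{x}\|$.

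First, I would decompose
\[
\langle F(x), x-\tilde{x}\rangle \;=\; \langle \varphi(x), x-\tilde{x}\rangle \;-\; \langle \varphi(x)-F(x), x-\tilde{x}\rangle .
\]
For the first term, since $\tilde{x}\in \text{SOL}(K,\varphi)$ we have $\langle \varphi(\tilde{x}), x-\tilde{x}\rangle \ge 0$, so adding and subtracting $\varphi(\tilde{x})$ gives
\[
\langle \varphi(x), x-\tilde{x}\rangle = \langle \varphi(x)-\varphi(\tilde{x}), x-\tilde{x}\rangle + \langle \varphi(\tilde{x}), x-\tilde{x}\rangle \;\ge\; \mu_{\varphi}\|x-\tilde{x}\|^2,
\]
where the inequality uses the strong monotonicity of $\varphi$ with constant $\mu_\varphi$.

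Next, I would bound the perturbation term by Cauchy--Schwarz followed by the hypothesis on $\|\varphi(x)-F(x)\|$, obtaining
\[
\bigl|\langle \varphi(x)-F(x), x-\tilde{x}\rangle\bigr| \;\le\; \|\varphi(x)-F(x)\|\cdot\|x-\tilde{x}\| \;\le\; d\,\|x-\tilde{x}\|^2.
\]
Combining the two estimates yields
\[
\langle F(x), x-\tilde{x}\rangle \;\ge\; (\mu_\varphi - d)\,\|x-\tilde{x}\|^2 \;\ge\; 0
\]
since $d<\mu_\varphi$, for every $x\in K$, establishing the claim.

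There is no real obstacle in this argument; the only conceptual point is the decomposition $F = \varphi - (\varphi-F)$, which is natural once one recognizes the hypothesis as saying $F$ is a strongly monotone-like mapping up to a controlled perturbation whose norm vanishes at $\tilde{x}$. The rest is a direct application of strong monotonicity of $\varphi$ together with the fact that the solution property of $\tilde{x}$ for VI$(K,\varphi)$ kills the $\varphi(\tilde{x})$ term.
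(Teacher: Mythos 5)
Your proof is correct and uses exactly the same two ingredients as the paper's: the lower bound $\langle\varphi(x),x-\Tilde{x}\rangle\ge\mu_\varphi\|x-\Tilde{x}\|^2$ from strong monotonicity plus $\Tilde{x}\in{\rm SOL}(K,\varphi)$, and the Cauchy--Schwarz bound $|\langle\varphi(x)-F(x),x-\Tilde{x}\rangle|\le d\|x-\Tilde{x}\|^2$. The only difference is packaging: the paper argues by contradiction via the set $L_<(\Tilde{x})=\{x\in K\mid\langle F(x),x-\Tilde{x}\rangle<0\}$, whereas your direct combination immediately yields the quantitative estimate $\langle F(x),x-\Tilde{x}\rangle\ge(\mu_\varphi-d)\|x-\Tilde{x}\|^2$, which is in fact the very inequality the paper has to re-derive later (in the proof of its convergence theorem for the extra-gradient method), so your version is arguably the more economical one.
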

\begin{proof}
 By the strong monotonicity of $\varphi$ and $\Tilde{x}\in {\rm SOL}(K,\varphi)$, we have
\begin{equation}\label{eq-var}
\langle\varphi(x), x-\Tilde{x}\rangle \geq \mu_{\varphi}\|x-\Tilde{x}\|^2\qquad\hbox{for all }x\in K.
\end{equation}
By the assumption that $\|\varphi(x)-F(x)\|\le d\|x-\Tilde{x}\|$ for all $x\in K$, it follows that
$$
\langle\varphi(x)-F(x), x-\Tilde{x}\rangle \le d\|x-\Tilde{x}\|^2\qquad\hbox{for all }x\in K.
$$
Let $L_<(\Tilde{x})=\{x\in K \mid\langle F(x), x-\Tilde{x}\rangle< 0\},$ 
and assume that $L_<(\Tilde{x})$ is nonempty. Then, we have
for all $x \in L_<(\Tilde{x})$, $$\langle\varphi(x), x-\Tilde{x}\rangle\le  d\|x-\Tilde{x}\|^2 < \mu_{\varphi}\|x-\Tilde{x}\|^2,$$
which contradicts relation~\eqref{eq-var} 
since $L_<(\Tilde{x})\subseteq K$. Thus, we must have $L_<(\Tilde{x})=\varnothing$, implying that
\[\left\langle F(x), x- \Tilde{x}\right\rangle \geq 0 \qquad\hbox{for all }x\in K .\]
Hence, $ \Tilde{x} \in {\rm MSOL}(K, F)$.
\end{proof}

\begin{remark}\label{Rem-Interpretation in games}
    In \cite{C20}, in the context of game theory for strongly convex cost functions in terms of the decision variable of the agents, in particular, where the Jacobian matrix is strictly diagonally dominant, it is proved that Nash equilibrium exists. This case can be considered as a particular case of Theorem~\ref{Thm-Minty sol}, where the mapping of the game is, in particular, strongly monotone, and the Jacobian has a strictly diagonally dominant structure. 
\end{remark}

\section{Extra-Gradient Method}\label{Sec-Algorithm analysis}
In this section, we consider the extra-gradient method (aka Korpelevich method)~\cite{C4} for the particular problem setup in the preceding section.
Specifically, while this method have been studied for monotone VIs, its convergence behavior for non-monotone VIs has not been thoroughly investigated. In our study, we use the following assumptions. 

\begin{assumption}\label{Assum-set-Lip}
    Let $K\subseteq\mathbb{R}^m$ be a nonempty closed convex set, and let the mapping  $F:K\rightarrow \mathbb{R}^m$  be Lipschitz continuous, i.e., 
    there exists a constant $L$ such that $\|F(x)-F(y)\|\leq L\|x-y\|$ for all $x,y\in K$.
\end{assumption}

The extra-gradient method is given by: for all $k\ge0$,
\begin{align}\label{Alg-Korpelevich}
        &y^k=\Pi_K[x^k-\alpha F(x^k)],\nonumber \\
        &x^{k+1}=\Pi_K[x^k-\alpha F(y^k)],
    \end{align}
    where $\alpha>0$ is a stepsize, and $x^0,y^0\in K$ are arbitrary initial points.
The following theorem shows that, having a Minty solution to VI$(K,F)$,  the extra-gradient method generates a bounded sequence $\{x^k\}$ with accumulation points in the set SOL$(K,F)$, for a suitable selection of the stepize.
\begin{theorem}[Extra Gradient Method \cite{C4}]\label{Thm-extragradient main variant}
    Let Assumption~\ref{Assum-set-Lip} hold. Assume that  the VI$(K,F)$ has a Minty solution, i.e., there is $\Tilde{x}\in {\rm MSOL}(K,F)$. Then, the sequence $\{x^k\}_{k=0}^\infty$ generated by the extra-gradient method~\eqref{Alg-Korpelevich}, with the stepsize $0<\alpha<\frac{1}{L}$, is bounded and every of its accumulation points  $\hat{x}$ is a solution to VI$(K,F)$.
\end{theorem}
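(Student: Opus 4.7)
The plan is to carry out the standard Korpelevich energy-decrease argument, with the key twist that, because we only have a Minty solution $\tilde x$ rather than a strong solution, the usual inner product $\langle F(y^k), y^k-\tilde x\rangle$ will be fed into the Minty inequality $\langle F(y^k), y^k-\tilde x\rangle\ge 0$ (note $y^k\in K$ since $y^k$ is a projection onto $K$).

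First, I would write down the two projection characterizations coming from $y^k=\Pi_K[x^k-\alpha F(x^k)]$ and $x^{k+1}=\Pi_K[x^k-\alpha F(y^k)]$. Namely, for every $x\in K$,
$\langle x^k-\alpha F(x^k)-y^k,\,x-y^k\rangle\le 0$ and $\langle x^k-\alpha F(y^k)-x^{k+1},\,x-x^{k+1}\rangle\le 0$. Setting $x=\tilde x$ in the second and using $2\langle a,b\rangle=\|a\|^2+\|b\|^2-\|a-b\|^2$, I obtain
\[
\|x^{k+1}-\tilde x\|^2\le \|x^k-\tilde x\|^2-\|x^k-x^{k+1}\|^2+2\alpha\langle F(y^k),\tilde x-x^{k+1}\rangle.
\]
Then I split $\tilde x-x^{k+1}=(\tilde x-y^k)+(y^k-x^{k+1})$. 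The first piece is nonpositive by the Minty property of $\tilde x$; the second piece I control by setting $x=x^{k+1}$ in the first projection inequality, which yields
\[
2\alpha\langle F(x^k),y^k-x^{k+1}\rangle\le 2\langle x^k-y^k,\,y^k-x^{k+1}\rangle=\|x^k-x^{k+1}\|^2-\|x^k-y^k\|^2-\|y^k-x^{k+1}\|^2,
\]
and then handle the residual $2\alpha\langle F(y^k)-F(x^k),y^k-x^{k+1}\rangle$ via Lipschitzness and Young's inequality, $2\alpha L\|y^k-x^k\|\|y^k-x^{k+1}\|\le \alpha^2L^2\|y^k-x^k\|^2+\|y^k-x^{k+1}\|^2$. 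Collecting everything gives the clean descent bound
\[
\|x^{k+1}-\tilde x\|^2\le \|x^k-\tilde x\|^2-(1-\alpha^2L^2)\|x^k-y^k\|^2.
\]

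With the stepsize condition $\alpha<1/L$, the coefficient $1-\alpha^2L^2$ is strictly positive, so $\{\|x^k-\tilde x\|^2\}$ is nonincreasing, hence $\{x^k\}$ is bounded, and by telescoping $\sum_{k=0}^\infty\|x^k-y^k\|^2<\infty$, which forces $\|x^k-y^k\|\to 0$. Then, given any accumulation point $\hat x$ of $\{x^k\}$ along a subsequence $x^{k_j}\to\hat x$, the estimate $\|x^{k_j}-y^{k_j}\|\to 0$ implies $y^{k_j}\to\hat x$ as well; passing to the limit in the first projection inequality $\langle x^{k_j}-\alpha F(x^{k_j})-y^{k_j},\,x-y^{k_j}\rangle\le 0$ using continuity of $F$ yields $\alpha\langle F(\hat x),x-\hat x\rangle\ge 0$ for every $x\in K$, so $\hat x\in\mathrm{SOL}(K,F)$.

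The main obstacle is bookkeeping in the derivation of the descent inequality: the $\|x^k-x^{k+1}\|^2$ and $\|y^k-x^{k+1}\|^2$ terms generated when estimating $2\alpha\langle F(y^k),y^k-x^{k+1}\rangle$ must cancel exactly against those produced by the projection identity and the Young step, leaving a clean negative $(1-\alpha^2L^2)\|x^k-y^k\|^2$. Beyond that, the only subtle point is making sure the Minty inequality is applied to $y^k$ (which lies in $K$), not to $x^k$, for which we have no a priori guarantee of being feasible for the Minty property unless $x^k\in K$; since $x^{k+1}$ is also a projection onto $K$, the iterates stay in $K$ after the first projection, and the argument closes.
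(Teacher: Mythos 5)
Your proposal is correct and follows essentially the same argument as the paper: the same projection inequality for $x^{k+1}$ with $x=\tilde x$, the same split $\tilde x-x^{k+1}=(\tilde x-y^k)+(y^k-x^{k+1})$ with the Minty inequality applied at $y^k\in K$, the same use of the $y^k$-projection inequality plus Lipschitzness and Young's inequality to reach $\|x^{k+1}-\tilde x\|^2\le\|x^k-\tilde x\|^2-(1-\alpha^2L^2)\|x^k-y^k\|^2$. The only cosmetic difference is the final step, where you pass to the limit directly in the projection inequality while the paper identifies $\hat x$ as a zero of the natural map $F_K^{nat}$ and invokes Theorem~\ref{Thm-auxiliary-natural map solution}; both routes are valid.
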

\begin{proof}
     Using the properties of the projection, from the definition of the iterate $x^{k+1}$ we have that for all $k\ge0$ and any $x\in K$,
\begin{align}\label{eq-alg 1}
        \|x^{k+1}-x\|^2\leq&\|x^k-\alpha F(y^k)-x\|^2 
        -\|x^k-\alpha F(y^k)-x^{k+1}\|^2\cr
        =&\|x^{k}-x\|^2-\|x^{k}-x^{k+1}\|^2 
        +2\alpha\langle F(y^k),x-x^{k+1} \rangle.
    \end{align}
    Letting $x=\Tilde{x}$, where
    $\Tilde{x}$ is a Minty solution, since $y^k\in K$, we have $\langle F(y^k),\Tilde{x}-y^{k} \rangle \leq 0$ for all $k\ge0$. Therefore,\begin{align}\label{eq-alg 2}
        \langle F(y^k),\Tilde{x}-x^{k+1} \rangle&=\langle F(y^k),\Tilde{x}-y^{k} \rangle 
        +\langle F(y^k),y^{k}-x^{k+1} \rangle \nonumber \\
        &\leq \langle F(y^k),y^{k}-x^{k+1} \rangle.
    \end{align}
    Using \eqref{eq-alg 2} in \eqref{eq-alg 1}, where $x=\tilde x$, we can write
     \begin{align}\label{eq-alg 3}
        \|x^{k+1}-\Tilde{x}\|^2\leq&\|x^{k}-\Tilde{x}\|^2-\|x^{k}-x^{k+1}\|^2 \nonumber \\
        &+2\alpha\langle F(y^k),y^{k}-x^{k+1} \rangle \nonumber \\
        =&\|x^{k}-\Tilde{x}\|^2-\|x^{k}-y^{k}\|^2-\|y^{k}-x^{k+1}\|^2 \nonumber \\
        &-2\langle x^{k}-y^{k},y^{k}-x^{k+1} \rangle+2\alpha\langle F(y^k),y^{k}-x^{k+1} \rangle \nonumber \\
        =&\|x^{k}-\Tilde{x}\|^2-\|x^{k}-y^{k}\|^2-\|y^{k}-x^{k+1}\|^2\nonumber \\
        &+2\langle x^{k}-\alpha F(y^k)-y^{k},x^{k+1}-y^{k} \rangle.
    \end{align}
We can estimate the last term in~\eqref{eq-alg 3} in the following form using the Cauchy inequality
\begin{align}\label{eq-alg 4}
        &\langle x^{k}-\alpha F(y^k)-y^{k},x^{k+1}-y^{k} \rangle \nonumber \\
        =& \langle x^{k}-\alpha F(x^k)-y^{k},x^{k+1}-y^{k} \rangle \nonumber \\
        &+\alpha\langle  F(x^k)-F(y^k),x^{k+1}-y^{k} \rangle \nonumber \\
        \leq &
        \alpha \langle F(x^k)-F(y^k),x^{k+1}-y^{k} \rangle \nonumber \\
        \leq &\alpha\|F(x^k)-F(y^k)\|\|x^{k+1}-y^{k}\|,
    \end{align}
    where the first inequality is obtained using the fact that $\langle x^{k}-\alpha F(x^k)-y^{k},x^{k+1}-y^{k} \rangle \le 0$ which follows from the projection inequality  $\langle z-\Pi_K[z],x-\Pi_K[z]\rangle\le 0$ for all $z\in\mathbb{R}^m$ and $x\in K$, the definition of $y^k$, and $x^{k+1}\in K$. 
Combining~\eqref{eq-alg 3} and \eqref{eq-alg 4}, and using the Lipschitz continuity of the mapping $F$,  we obtain the following relation
\begin{align}\label{eq-alg 6}
        \|x^{k+1}-\Tilde{x}\|^2\leq&\|x^{k}-\Tilde{x}\|^2-\|x^{k}-y^{k}\|^2-\|y^{k}-x^{k+1}\|^2 \nonumber \\
        &+2\alpha L\|x^k-y^k\|\|x^{k+1}-y^{k}\| \nonumber \\
        \leq&\|x^{k}-\Tilde{x}\|^2-\|x^{k}-y^{k}\|^2-\|y^{k}-x^{k+1}\|^2 \nonumber \\
        &+\alpha^2 L^2\|x^k-y^k\|^2+\|x^{k+1}-y^{k}\|^2,
    \end{align}
    where in the last inequality in~\eqref{eq-alg 6} we use 
\[2\alpha  L\|x^k-y^k\|\|x^{k+1} - y^k\|
\le 
        \alpha^2 L^2\|x^k-y^k\|^2+\|x^{k+1} - y^k\|^2.\]
From~\eqref{eq-alg 6} it follows that for all $k\ge0$, 
\begin{align*}
        \|x^{k+1}-\Tilde{x}\|^2&\leq\|x^{k}-\Tilde{x}\|^2-(1-\alpha^2 L^2)\|x^k-y^k\|^2.
    \end{align*}
Therefore, for $\alpha<\frac{1}{L}$, we see that $\|x^k-y^k\|\to0$ and $\|x^{k}-\Tilde{x}\|^2$ converges, and, hence, $\{x^{k}\}_{k=0}^\infty$ is bounded. As a result, for every convergent subsequence $\{x^{k_i}\}_{i=1}^\infty$ with $\lim_{i\rightarrow\infty}x^{k_i}=\hat{x}$, the limit point $\hat x$ is in the set $K$ since $K$ is closed.
From the definition of $y^k$, we have that 
\[\|y^k-x^k\|=\|\Pi_K[x^k-\alpha F(x^k)]-x^k\|.\]
Since $\|x^k-y^k\|\to0$, it follows that 
$\|\Pi_K[x^k-\alpha F(x^k)]-x^k\|\to 0$.
Thus, for any convergent subsequence $\{x^{k_i}\}_{i=1}^\infty$ with
$\lim_{i\rightarrow\infty}x^{k_i}=\hat{x}$, it follows that $\|\Pi_K[x^k-\alpha F(x^k)]-x^k\|=0,$
implying that 
$\hat{x}=\Pi_K[\hat{x}-\alpha F(\hat{x})]$. 
Hence, $F_K^{nat}(\hat x)=0$ and by Theorem~\ref{Thm-auxiliary-natural map solution}, we have that 
$\hat{x}\in {\rm SOL}(K,F)$.
\end{proof}
We note that the assumption of Theorem~\ref{Thm-extragradient main variant} that VI$(K,F)$ has a Minty solution together with the assumption that the mapping $F$ is Lipschitz continuous implies that VI$(K,F)$ has a solution
by Lemma~\ref{Lem-Minty Lemma}(a).



We next provide a condition under which the iterates of the extra-gradient method converge to a Minty solution.
\begin{assumption}\label{Assum-existence of minty solution}
Let $\varphi:K\to\mathbb{R}^m$ be a strongly monotone mapping with $\Tilde{x}\in {\rm SOL}(K,\varphi)$.
Assume that $\|\varphi(x)-F(x)\|\le d\|x-\Tilde{x}\|$ for some $d < \mu_{\varphi}$ and for all $x\in K$, where $\mu_{\varphi}$ is the strong monotonicity constant of the mapping $\varphi$.
\end{assumption}

The following theorem shows that, under the assumptions of this section and a suitable choice of the stepize, the extra-gradient method converges to a Minty solution of VI$(K,F)$.
\begin{theorem}\label{Thm-Koorpelevich with all assumptions}
    Let Assumptions~\ref{Assum-set-Lip} and~\ref{Assum-existence of minty solution} hold. Consider the method~\eqref{Alg-Korpelevich} with a stepsize $0<\alpha<\frac{1}{L}$. Then, $\{x^k\}_{k=0}^\infty$ and $\{y^k\}_{k=0}^\infty$ converge to the unique solution $\Tilde{x}\in {\rm SOL}(K,\varphi)$, which is a Minty solution to VI$(K,F)$.
\end{theorem}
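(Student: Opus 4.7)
The plan is to bootstrap two earlier results and then pin down the unique accumulation point. First, Theorem~\ref{Thm-Minty sol} applied to Assumption~\ref{Assum-existence of minty solution} immediately gives $\Tilde{x}\in{\rm MSOL}(K,F)$, so the hypotheses of Theorem~\ref{Thm-extragradient main variant} are in force. That theorem, together with Assumption~\ref{Assum-set-Lip} and $0<\alpha<1/L$, yields boundedness of $\{x^k\}$, the vanishing $\|x^k-y^k\|\to 0$, and that every accumulation point of $\{x^k\}$ lies in ${\rm SOL}(K,F)$. Moreover, its proof already produces the Fej\'er-type estimate
\begin{equation*}
\|x^{k+1}-\Tilde{x}\|^2\le\|x^k-\Tilde{x}\|^2-(1-\alpha^2L^2)\|x^k-y^k\|^2,
\end{equation*}
so $\{\|x^k-\Tilde{x}\|\}$ is non-increasing and hence convergent.

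What remains is to identify $\Tilde{x}$ as the unique accumulation point of $\{x^k\}$. We take any accumulation point $\hat{x}\in K$ and derive $\hat{x}=\Tilde{x}$ by a two-sided squeeze. On one side, $\hat{x}\in{\rm SOL}(K,F)$ together with $\Tilde{x}\in K$ gives $\langle F(\hat{x}),\hat{x}-\Tilde{x}\rangle\le 0$. On the other side, strong monotonicity of $\varphi$ combined with $\Tilde{x}\in{\rm SOL}(K,\varphi)$ yields $\langle\varphi(\hat{x}),\hat{x}-\Tilde{x}\rangle\ge\mu_\varphi\|\hat{x}-\Tilde{x}\|^2$, while the perturbation bound of Assumption~\ref{Assum-existence of minty solution} together with Cauchy--Schwarz gives $\langle\varphi(\hat{x})-F(\hat{x}),\hat{x}-\Tilde{x}\rangle\le d\|\hat{x}-\Tilde{x}\|^2$. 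Subtracting the latter from the former yields
\begin{equation*}
\langle F(\hat{x}),\hat{x}-\Tilde{x}\rangle\ge(\mu_\varphi-d)\|\hat{x}-\Tilde{x}\|^2,
\end{equation*}
and since $\mu_\varphi-d>0$, the two inequalities force $\hat{x}=\Tilde{x}$.

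Combining uniqueness of the accumulation point with convergence of $\|x^k-\Tilde{x}\|$ then yields $\lim_k\|x^k-\Tilde{x}\|=0$: were this limit positive, the bounded sequence $\{x^k\}$ would admit a subsequence converging to some $\hat{x}\ne\Tilde{x}$, contradicting the squeeze. Consequently $x^k\to\Tilde{x}$, and $\|x^k-y^k\|\to 0$ forces $y^k\to\Tilde{x}$. Uniqueness of $\Tilde{x}$ inside ${\rm SOL}(K,\varphi)$ is automatic because $\varphi$ is strongly monotone.

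The step I expect to dominate the work is the squeeze identifying the accumulation point. The delicacy is that the strong-solution inequality satisfied by $\hat{x}$ is for the algorithmic mapping $F$, whereas the quantitative lower bound arises from strong monotonicity of the \emph{auxiliary} mapping $\varphi$; these have to be linked through the perturbation estimate $\|\varphi(x)-F(x)\|\le d\|x-\Tilde{x}\|$, and the hypothesis $d<\mu_\varphi$ is exactly what makes the squeeze strict away from $\Tilde{x}$.
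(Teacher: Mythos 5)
Your proof is correct, but it is organized differently from the paper's. The paper re-runs the entire extra-gradient estimate from scratch, inserting the bound $\langle F(y^k),\Tilde{x}-y^k\rangle\le-(\mu_\varphi-d)\|y^k-\Tilde{x}\|^2$ (obtained from exactly the inequality $(\mu_\varphi-d)\|x-\Tilde{x}\|^2\le\langle F(x),x-\Tilde{x}\rangle$ that you derive) directly into the recursion, arriving at
\[
\|x^{k+1}-\Tilde{x}\|^2\le\|x^{k}-\Tilde{x}\|^2-(1-\alpha^2L^2)\|x^k-y^k\|^2-2\alpha(\mu_\varphi-d)\|y^k-\Tilde{x}\|^2,
\]
from which $\|y^k-\Tilde{x}\|\to0$ and hence $x^k\to\Tilde{x}$ follow immediately. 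You instead treat Theorem~\ref{Thm-extragradient main variant} as a black box (boundedness, Fej\'er monotonicity, $\|x^k-y^k\|\to0$, accumulation points in ${\rm SOL}(K,F)$) and apply the same key inequality only at an accumulation point $\hat{x}$, squeezing $0\ge\langle F(\hat{x}),\hat{x}-\Tilde{x}\rangle\ge(\mu_\varphi-d)\|\hat{x}-\Tilde{x}\|^2$ to identify $\hat{x}=\Tilde{x}$; all steps of that squeeze check out, and combining the unique accumulation point with convergence of $\|x^k-\Tilde{x}\|$ does give $x^k\to\Tilde{x}$ and then $y^k\to\Tilde{x}$. Your route is more modular and avoids duplicating the projection algebra; the paper's route buys a quantitative payoff your argument discards, namely the summability $\sum_k\|y^k-\Tilde{x}\|^2<\infty$, which is the natural starting point for a convergence rate.
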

\begin{proof}
Assumptions~\ref{Assum-set-Lip} and~\ref{Assum-existence of minty solution} imply that the unique solution $\tilde x$ to the VI$(K,\varphi)$ is a Minty solution to VI$(K,F)$ by Theorem~\ref{Thm-Minty sol}. 
Using relation~\eqref{eq-alg 1} with $x=\tilde x$, we obtain
\begin{align}\label{eq-alg 1'}
        \|x^{k+1}-\Tilde{x}\|^2\leq&\|x^{k}-\Tilde{x}\|^2-\|x^{k}-x^{k+1}\|^2 \nonumber \\
        &+2\alpha\langle F(y^k),\Tilde{x}-x^{k+1} \rangle.
    \end{align}
    By Assumption~\ref{Assum-existence of minty solution}, we have for all $x\in K$,
    $$
\langle\varphi(x)-F(x), x-\Tilde{x}\rangle 
\le\| \varphi(x)-F(x)\|\|x-\tilde x\|
\le d\|x-\Tilde{x}\|^2.
$$
    Since $\tilde x\in{\rm SOL}(K,\varphi)$, we have $\langle\varphi(\Tilde{x}),x-\Tilde{x}\rangle\geq 0$. Thus,
    we can write
    \begin{equation*}
        \langle\varphi(x), x-\Tilde{x}\rangle \leq \langle F(x), x-\Tilde{x}\rangle+\langle\varphi(\Tilde{x}),x-\Tilde{x}\rangle+d\|x-\Tilde{x}\|^2.
    \end{equation*}
   Re-arranging the terms in the preceding relation, we obtain
    \begin{equation}\label{eq-use of assumption-1}
        \langle\varphi(x)-\varphi(\Tilde{x}), x-\Tilde{x}\rangle-d\|x-\Tilde{x}\|^2 \leq \langle F(x), x-\Tilde{x}\rangle.
    \end{equation}
    Lower bounding the left hand side of \eqref{eq-use of assumption-1} by the strong monotonicity of $\varphi$ we have
    for all $x\in K,$
\begin{equation}\label{eq-use of assumption-2}
        (\mu_{\varphi}-d)\|x-\Tilde{x}\|^2 \leq \langle F(x), x-\Tilde{x}\rangle.
    \end{equation}
    Therefore, we have $\langle F(y^k),\Tilde{x}-y^{k} \rangle \leq -(\mu_{\varphi}-d)\|y^k-\Tilde{x}\|^2$.
    Now, we estimate the inner product in 
    \eqref{eq-alg 1'}, as follows:
    \begin{align}\label{eq-alg 2'}
        \langle F(y^k),\Tilde{x}-x^{k+1} \rangle=&\langle F(y^k),\Tilde{x}-y^{k} \rangle +\langle F(y^k),y^{k}-x^{k+1} \rangle \nonumber \\
        \leq & \langle F(y^k),y^{k}-x^{k+1} \rangle\cr
        &-(\mu_{\varphi}-d)\|y^k-\Tilde{x}\|^2.
    \end{align}
    Using \eqref{eq-alg 2'} in \eqref{eq-alg 1'} we obtain
     \begin{align}\label{eq-alg 3'}
        \|x^{k+1}-\Tilde{x}\|^2
        \leq&\|x^{k}-\Tilde{x}\|^2-\|x^{k}-x^{k+1}\|^2 \nonumber \\
        &+2\alpha\langle F(y^k),y^{k}-x^{k+1} \rangle-2\alpha(\mu_{\varphi}-d)\|y^k-\Tilde{x}\|^2 \nonumber \\
        =&\|x^{k}-\Tilde{x}\|^2-\|x^{k}-y^{k}\|^2-\|y^{k}-x^{k+1}\|^2 \nonumber \\
        &-2\langle x^{k}-y^{k},y^{k}-x^{k+1} \rangle+2\alpha\langle F(y^k),y^{k}-x^{k+1} \rangle \nonumber \\
        &-2\alpha(\mu_{\varphi}-d)\|y^k-\Tilde{x}\|^2\nonumber \\
        =&\|x^{k}-\Tilde{x}\|^2-\|x^{k}-y^{k}\|^2-\|y^{k}-x^{k+1}\|^2\nonumber \\
        &+2\langle x^{k}-\alpha F(y^k)-y^{k},x^{k+1}-y^{k} \rangle\nonumber \\
        &-2\alpha(\mu_{\varphi}-d)\|y^k-\Tilde{x}\|^2.
    \end{align}
Next, we estimate the inner product term in~\eqref{eq-alg 3'}, as follows
\begin{align}\label{eq-alg 4'}
        &\langle x^{k}-\alpha F(y^k)-y^{k},x^{k+1}-y^{k} \rangle \nonumber \\
        =&\langle x^{k}-\alpha F(x^k)-y^{k},x^{k+1}-y^{k} \rangle \nonumber \\
        &+\alpha\langle  F(x^k)-F(y^k),x^{k+1}-y^{k} \rangle \nonumber \\
        \leq &\alpha\langle  F(x^k)-F(y^k),x^{k+1}-y^{k} \rangle \nonumber \\
        \leq&\alpha\|F(x^k)-F(y^k)\|\|x^{k+1}-y^{k}\|,
    \end{align}
    where the first inequality follows from $\langle x^{k}-\alpha F(x^k)-y^{k},x^{k+1}-y^{k} \rangle \le 0$, which is due to the definition of $y^k$, the projection inequality  $\langle z-\Pi_K[z],x-\Pi_K[z]\rangle\le 0$ for all $z\in\mathbb{R}^m$ and $x\in K$, and the fact that $x^{k+1}\in K$.

Using the Lipschitz continuity of the mapping $F$, and combining~\eqref{eq-alg 3'} and \eqref{eq-alg 4'}, we have the following
relation
\begin{align}\label{eq-alg 6'}
        \|x^{k+1}-\Tilde{x}\|^2=&\|x^{k}-\Tilde{x}\|^2-\|x^{k}-y^{k}\|^2-\|y^{k}-x^{k+1}\|^2 \nonumber \\
        &+2\alpha L\|x^k-y^k\|\|x^{k+1}-y^{k}\| \nonumber \\
        &-2\alpha(\mu_{\varphi}-d)\|y^k-\Tilde{x}\|^2 \nonumber \\
        \leq&\|x^{k}-\Tilde{x}\|^2-\|x^{k}-y^{k}\|^2-\|y^{k}-x^{k+1}\|^2 \nonumber \\
        &+\alpha^2 L^2\|x^k-y^k\|^2+\|x^{k+1}-y^{k}\|^2\nonumber \\
        &-2\alpha(\mu_{\varphi}-d)\|y^k-\Tilde{x}\|^2,
    \end{align}
where the last inequality in~\eqref{eq-alg 6'} follows from
\[ 2\alpha L\|x^k-y^k\|\|x^{k+1}-y^{k}\| 
\le \alpha^2 L^2\|x^k-y^k\|^2+\|x^{k+1}-y^{k}\|^2. \]
Thus, from~\eqref{eq-alg 6'} we obtain 
\begin{align*}
\label{eq-alg 7'}
        \|x^{k+1}-\Tilde{x}\|^2\le &\|x^{k}-\Tilde{x}\|^2-(1-\alpha^2 L^2)\|x^k-y^k\|^2 \nonumber \\
        &-2\alpha(\mu_{\varphi}-d)\|y^k-\Tilde{x}\|^2.
    \end{align*}
    The preceding relation holds for all $k\ge0$.
Since $0<\alpha<\frac{1}{L}$ and $d<\mu_\varphi$, we can see that $\|x^k-y^k\|\to0$ and $\|y^k-\Tilde{x}\|\to0$, implying that
$\|x^{k}-\Tilde{x}\|^2\to0$. converges, $\|x^k-y^k\|$, Therefore, $\{x^k\}$ and $\{y^k\}$ converge to $\Tilde{x}$. 
\end{proof}

\section{Conclusions}\label{Sec-conclusion}
In this paper, we studied non-constrained non-monotone VIs through the inverse mapping theorem and obtained some conditions for the existence of solutions to such VIs. We showed that mappings with orthogonal structures or those mappings satisfying our weak coupling criteria meet this set of conditions, guaranteeing a solution exists to the VI. We stepped forward and extended these results for the case of constrained non-monotone VIs where the domain is nonempty closed convex but not necessarily compact. Moreover, we derived some conditions that guarantee there exists a Minty solution given the fact that at least one solution exists for a related VI. Finally, we showed that the extra-gradient method can converge to one of the solutions under some suitable assumptions. In the context of game theory, these results can be interpreted and lead to obtaining sufficient conditions to guarantee the existence of a quasi-Nash or Nash equilibrium in the corresponding games, which we will explore in the future. Hence, the extra gradient method can be efficient in obtaining a quasi-Nash or a Nash equilibrium for the class of games satisfying our conditions.

\bibliographystyle{plain}
\bibliography{main}
\end{document}